\documentclass[14pt]{extarticle}
\usepackage{extsizes}
\usepackage{amssymb,amsfonts,amsthm}
\usepackage{amsmath}
\usepackage[makeroom]{cancel}
\usepackage{mathtools}
\usepackage{mathrsfs,pifont}
\usepackage{slashed,mathabx} 
\usepackage[bbgreekl]{mathbbol}
\usepackage{tikz}
\usepackage{framed}
\usetikzlibrary{calc}
\usepackage{mdframed}
\usepackage{amscd}

\usepackage[top=1in, bottom=1.25in, left=0.75in,right=0.75in]{geometry}
\usepackage{authblk}

\usepackage[font={small,sl}]{caption}
\usepackage[all]{xy}

\usepackage{hyperref}

\makeatletter
\newcommand*{\transpose}{%
  {\mathpalette\@transpose{}}%
}
\newcommand*{\@transpose}[2]{%
  \raisebox{\depth}{$\m@th#1\intercal$}%
}

\newtheorem{theorem}{Theorem}[section]
\newtheorem{lemma}[theorem]{Lemma}
\newtheorem{corollary}[theorem]{Corollary}
\theoremstyle{definition}

\newtheorem{proposition}[theorem]{Proposition}

\theoremstyle{remark}
\newtheorem{remark}[theorem]{Remark}
\numberwithin{equation}{section}

\makeatletter
\def\blfootnote{\xdef\@thefnmark{}\@footnotetext}
\makeatother

\begin{document}

\date{}
\title{}

\title{Aldous property for full-flag Johnson graphs}

\author{Gary Greaves\thanks{Division of Mathematical Sciences, Nanyang Technological University, 21 Nanyang Link, Singapore 637371. Email: gary@ntu.edu.sg.} \qquad Haoran Zhu\thanks{Division of Mathematical Sciences, Nanyang Technological University, 21 Nanyang Link, Singapore 637371. Email: zhuh0031@e.ntu.edu.sg} }

\maketitle

\begin{abstract}
We show that the full-flag Johnson graph has spectral gap equal to that of its Schreier quotient arising from the point-stabiliser equitable partition.
Our results confirm two conjectures posed by Huang, Huang, and Cioabă, which imply an
Aldous-type spectral-gap phenomenon for full-flag Johnson graphs. 
\end{abstract}

{\noindent \it Keywords:} Spectral gap, Johnson graph, Schreier graph, symmetric group, Aldous' conjecture.
    
{\noindent \it Mathematics Subject Classification:} 05C50, 05C25, 20B30, 15A18, 60K35

\section{Introduction}

The second-largest eigenvalue of a regular graph is widely regarded as a key indicator of its global connectivity and expansion properties~\cite{Alon,Alon2,Do,Mohar}. 
In particular, relative to the degree, a smaller second-largest eigenvalue typically reflects stronger connectivity, faster mixing of random walks, and better expansion.

The notion of the \textit{Aldous property} arises naturally at the interface of algebra, graph theory, and probability. 
A Cayley graph $\operatorname{Cay}(S_n,\Sigma)$ on the symmetric group $S_n$ with generating set $\Sigma$ is said to have the \textbf{Aldous property} if its second-largest eigenvalue coincides with that of the \textbf{Schreier graph} $\operatorname{Sch}(S_n,[n],\Sigma)$, that is, the graph with vertex set $[n]:=\{1,\dots,n\}$ and an arc from $i$ to $j$ for each $\sigma \in \Sigma$ satisfying $\sigma(i)=j$.

Aldous conjectured~\cite[Open Problem 14.29]{A} that $\operatorname{Cay}(S_n,\Sigma)$ has the Aldous property when $\Sigma$ is a set of transpositions. 
This conjecture stimulated nearly two decades of research, including partial results for random walks on finite groups~\cite{FOW}, Coxeter graphs~\cite{Bacher}, multipartite graphs~\cite{Cesi2}, and certain weighted graphs~\cite{Dieker}, before its full resolution by Caputo, Liggett, and Richthammer~\cite{CLR}.

The spectral theory of Cayley graphs can be distinguished by normality.
The Cayley graph $\operatorname{Cay}(G,\Sigma)$ is said to be \textbf{normal} if $\Sigma$ is closed under conjugation in $G$, that is, if $\Sigma$ is a union of conjugacy classes.
Normal Cayley graphs admit a particularly tractable spectral theory: their eigenvalues can be expressed explicitly in terms of the irreducible characters of the underlying group~\cite{DS,Z}.
In contrast, the spectral analysis of non-normal Cayley graphs is considerably more intricate. 
The absence of conjugacy invariance prevents a direct application of character theory, and comparatively few general results are currently available.

Variations and generalisations of Aldous' conjecture continue to attract attention~\cite{AKP,AP,Cesi1,CT,GZ,HH,HHC,LXZ1,LXZ2,PP}. 
Various families of Cayley graphs $\operatorname{Cay}(S_n,\Sigma)$ have been shown to possess the Aldous property for suitably structured generating sets $\Sigma$.
In the case of normal Cayley graphs, we refer the reader to \cite{HHC,PP}.
In the case of non-normal Cayley graphs, the Aldous property has been established when $\Sigma$ consists of reversals~\cite{CT}, prefix-reversals~\cite{Cesi1}, and certain subsets of cycles~\cite{LXZ2}.
As we show below (see Section~\ref{sec:cg}), in this paper, we are concerned with the (non-normal) case where $\Sigma$ consists of $(n-2)$-reducible permutations.

A \textbf{full flag} of $[n]$ is a chain $U=(U_1,\dots,U_n)$ of subsets satisfying $|U_i|=i$ and $U_i\subset U_{i+1}$. For integers $n$ and $k$ with $0<k<n$, the \textbf{Johnson graph} $J(n,k)$ is the graph whose vertices are the $k$-subsets of $[n]$, where two vertices $U$ and $V$ are adjacent whenever $|U\cap V|=k-1$.

In 2016, Dai~\cite{D1} introduced the \textbf{full-flag Johnson graph} ${{F\!\!J}}(n,k)$, for $0\le k<n$. Its vertex set consists of all full flags of $[n]$, and two full flags
\[
U=(U_1,\dots,U_n)
\quad \text{and} \quad
V=(V_1,\dots,V_n)
\]
are adjacent if and only if $\bigl|\{\, i\in[n] : U_i \neq V_i \,\}\bigr| = k$.
Equivalently, $U$ and $V$, viewed as collections of subsets of $[n]$, are adjacent if and only if they have exactly $n-k$ subsets in common; that is, $|U\cap V|=n-k$.

Dai~\cite{D1} observed that the permutahedron of order $n$ coincides with ${{F\!\!J}}(n,1)$, thereby placing ${{F\!\!J}}(n,k)$ as a natural generalisation of permutahedra. In~\cite{D2}, Dai calculated all eigenvalues of the quotient graph of ${{F\!\!J}}(n,1)$ with respect to a certain equitable partition, and conjectured that ${{F\!\!J}}(n,1)$ and its quotient graph share the same second-largest eigenvalue.

Huang, Huang, and Cioab\u{a}~\cite{HHC} observed that Dai's conjecture is a special case of Aldous' conjecture and initiated an analogous study of ${{F\!\!J}}(n,2)$ and implicitly conjectured that ${{F\!\!J}}(n,2)$ has the Aldous property~\cite[Conjecture 6.9]{LiThesis}.
Our main theorem resolves this conjecture.

\begin{theorem}
\label{thm:main}
For each $n\ge 4$, the full-flag Johnson graph ${{F\!\!J}}(n,2)$ has the Aldous property.
\end{theorem}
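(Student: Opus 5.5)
The plan is to realise ${{F\!\!J}}(n,2)$ as a Cayley graph on $S_n$ and then compare spectra irrep by irrep. Identify a full flag $U$ with the permutation $\pi\in S_n$ satisfying $U_i=\{\pi(1),\dots,\pi(i)\}$. Then $V=U\circ\sigma$, where $\sigma$ fails to stabilise exactly two of the initial segments $\{1,\dots,i\}$ for $1\le i\le n-1$.

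Write $s_a=(a\ a{+}1)$. Classifying such $\sigma$ (the $(n-2)$-reducible permutations) gives the generating set
\[
\Sigma=\{s_as_b: |a-b|\ge 2\}\cup\{s_as_{a+1},\,s_{a+1}s_a\}_{a}\cup\{(a\ a{+}2)=s_as_{a+1}s_a\}_{a}.
\]
The first family comes from two non-consecutive unstable indices. The other two come from two consecutive unstable indices $i,i+1$, where $\sigma$ acts on $\{i,i+1,i+2\}$ as a 3-cycle or as the transposition $(i\ i{+}2)$.

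In the group algebra put $X=\sum_{a=1}^{n-1}s_a$ (the Coxeter path generators). A direct expansion gives
\[
A:=\sum_{\sigma\in\Sigma}\sigma=\tfrac12\bigl(X^2-(n-1)\bigr)+\sum_{a=1}^{n-2}s_as_{a+1}s_a .
\]
So the adjacency operator of ${{F\!\!J}}(n,2)$ (right regular action) is this element. For each irreducible representation $\rho_\lambda$ of $S_n$, the spectrum of the graph is the union of the spectra of $\rho_\lambda(A)$.

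The Schreier graph on $[n]$ corresponds to the permutation module, which is trivial plus $(n-1,1)$. Hence the theorem is equivalent to
\[
\lambda_{\max}\bigl(\rho_\lambda(A)\bigr)\le\lambda_{\max}\bigl(\rho_{(n-1,1)}(A)\bigr)\quad\text{for all } \lambda\ne(n).
\]
I would compute the right-hand side explicitly by writing down the $n\times n$ quotient matrix on $[n]$ and finding its second eigenvalue. That quotient is essentially a path-like weighted graph, so this is an explicit computation.

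I would prove the inequality by induction on $n$ in the style of Cesi and Caputo--Liggett--Richthammer. Restrict to $S_{n-1}$, acting on the first $n-1$ letters. Then $A_n=A_{n-1}+R_n$, where $R_n$ collects the generators involving the letter $n$: $s_{n-1}$ times $\sum_{b\le n-3}s_b$, the two 3-cycles on $\{n-2,n-1,n\}$, and $(n-2\ n)$.

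Branching $\rho_\lambda\!\downarrow_{S_{n-1}}=\bigoplus_{\mu}\rho_\mu$, the induction hypothesis controls $A_{n-1}$ on each $\rho_\mu$. I would then aim for an operator inequality, an analogue of the octopus inequality, of the form $R_n\preceq$ (a correction that is maximised on the standard representation), applied inside the permutation-module comparison. The base case $n=4$, and possibly $n=5$, I would check directly from the character table and the small irreps.

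The main obstacle is the $X^2$ term. The spectral gap of $X$ is governed by Aldous' theorem, but $X^2$ is also large on representations where $X$ has very negative eigenvalues: near the sign representation, $X\approx-(n-1)$. The decomposition therefore does not immediately reduce to Aldous.

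Accordingly, I would treat the two ends separately. For $\lambda$ with few rows, bound $\rho_\lambda(A)$ via $\rho_\lambda(X)\le\lambda_2(X)$ together with a lower bound on $\rho_\lambda(X)$ and the trivial bound $\|s_as_{a+1}s_a\|\le1$. For $\lambda$ close to $(1^n)$, tensor with the sign character. This sends $X\mapsto -X$, fixes $X^2$, and negates the $(a\ a{+}2)$ term. The problem is then reduced to comparing $\tfrac12X^2-\sum_a (a\ a{+}2)$ on $\lambda'$, where the negated term helps. The two regimes would have to be glued by a crude norm estimate in the middle range, and making that estimate sharp enough is where I expect the real work to lie.
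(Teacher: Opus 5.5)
You have not proved the statement. The reformulation is correct: the generating set, the irreducible decomposition, and the equivalence with $\lambda_{\max}(\rho_\lambda(A))\le\lambda_{\max}(\rho_{(n-1,1)}(A))$ for $\lambda\ne(n)$. Your split $A_n=A_{n-1}+R_n$ is, after the reversal $i\mapsto n+1-i$, exactly the paper's split of $\mathcal R_n(2)$ into $\mathcal R_n(2)\cap\operatorname{Stab}_n(1)$ and $\mathcal R'_n(2)$. But the decisive inequality is never established: the ``octopus-type'' inequality is not formulated, and the gluing of regimes is explicitly left open.

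The estimates you do describe are provably too weak, because the margin to certify is $\mu_2(\mathfrak L(\mathsf Q_n))=O(1/n)$ (Lemma~\ref{lem:upperbound-mu2-Ln}). Put $g=2-2\cos(\pi/n)$. The best Caputo--Liggett--Richthammer gives is $\|\rho_\lambda(X)\|\le n-1-g$. Bounding the 3-cycle and $(a\ a{+}2)$ sums by their norms then yields only $\lambda_{\max}(\rho_\lambda(A))\le\frac{(n-2)(n+3)}{2}-(n-1)g+\frac{g^2}{2}$. On the other hand, take a unit monotone Fiedler vector (Corollary~\ref{cor:fiedler-monotoneskewsym}), so all $d_i\ge0$. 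The expansion in the proof of Proposition~\ref{prop:eigen-Ln-drop} then gives $\mu_2(\mathfrak L(\mathsf Q_n))\ge n\sum_i d_i^2\ge ng$. Hence $\lambda_2(\mathsf Q_n)$ lies strictly below your bound. The reason is that on $(n-1,1)$ those extra generators contribute to the gap, and norm bounds discard that contribution. You would need gap information on $X$ for every $\lambda\notin\{(n),(n-1,1)\}$ strictly beyond Aldous' theorem.

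Separately, your expansion of $A$ is wrong. Each 3-cycle occurs only once in $X^2$, so $A=\frac12(X^2-(n-1))+\frac12\sum_a(s_as_{a+1}+s_{a+1}s_a)+\sum_a s_as_{a+1}s_a$. On the trivial representation your version gives $\frac{(n-2)(n+1)}{2}$, not the degree $\frac{n^2+n-6}{2}$. The omitted term is even, so it is not negated by your sign twist.

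The missing idea is that no refined operator inequality is needed: plain Weyl on each irreducible closes the induction once two auxiliary facts are in hand. For $\lambda\notin\{(n),(n-1,1)\}$ the restriction to $S_{n-1}$ has no trivial constituent. Hence $\lambda_{\max}(\rho_\lambda(A_n))\le\lambda_2(\operatorname{Cay}(S_{n-1},\mathcal R_{n-1}(2)))+\lambda_2(\operatorname{Cay}(S_n,R_n))$, which is Lemma~\ref{lem:graph-covering-highly-trans}. The paper then supplies the two facts.
\begin{enumerate}
\item $\operatorname{Cay}(S_n,R_n)$ itself has the Aldous property (Lemma~\ref{lem:n1}). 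The proof is the same argument, splitting $R_n$ into a shifted copy of $R_{n-1}$ and the single generator $s_1s_{n-1}$, whose Cayley graph is a perfect matching with $\lambda_2=1$.
\item The strict inequalities of Theorem~\ref{thm:HHC-ineq} follow from strict monotonicity in $n$ of the Laplacian gaps $\mu_2(\mathfrak L(\mathsf Q'_n))$ and $\mu_2(\mathfrak L(\mathsf Q_n))$. This works because the degrees add up: $n=(n-1)+1$ and $\frac{n^2+n-6}{2}=\frac{n^2-n-6}{2}+n$.
\end{enumerate}
The monotonicity is proved with a Rayleigh quotient on the Fiedler vector extended by repeating its last entry. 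The boundary differences are controlled by the three-term recurrence for $\mathsf Q'_n$, and by a monotone skew-symmetric Fiedler vector from the Robinson property for $\mathsf Q_n$. Small $n$ are checked by computer. In particular, $\lambda_2(\mathsf Q_n)$ is never needed explicitly; only these comparisons are.
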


The statement of Theorem~\ref{thm:main} implies that ${{F\!\!J}}(n,2)$ is a Cayley graph on the symmetric group $S_n$, a fact already established by Dai~\cite{D1}.
However, ${{F\!\!J}}(n,2)$ is a non-normal Cayley graph, and hence character-theoretic methods are not directly applicable for deriving expressions for its eigenvalues~\cite{LXZ2}.

Our proof of Theorem~\ref{thm:main} follows an approach suggested by Huang, Huang, and Cioab\u{a}, which we outline in Section~\ref{sect:quotient-matrices}. To implement this strategy, we establish two conjectures of Huang, Huang, and Cioab\u{a}~\cite[Conjectures 23 and 24]{HHC} concerning recursive inequalities for the second-largest eigenvalue of quotient matrices arising from the point-stabiliser partition of the vertex set. The proof of these inequalities is primarily inductive and relies on several linear-algebraic arguments, viewed naturally from the perspective of Laplacian operators.

The paper is organised as follows.
In Section~\ref{sect:quotient-matrices}, we present the full-flag Johnson graph ${{F\!\!J}}(n,2)$ as a Cayley graph whose generating set consists of $(n-2)$-reducible permutations. 
We then reduce the proof of Theorem~\ref{thm:main} to establishing certain recursive inequalities (see Theorem~\ref{thm:HHC-ineq}).
In Section~\ref{sec:eigen-inequality}, we prove Theorem~\ref{thm:HHC-ineq}.
 
\section{Cayley graphs and equitable partitions}\label{sect:quotient-matrices}

\subsection{Full-flag Johnson graph as a Cayley graph}
\label{sec:cg}

Let $G$ be a finite group and let $\Sigma\subseteq G$ satisfying $1_G \notin \Sigma$ and $g^{-1} \in \Sigma$ for each $g \in \Sigma$.
The \textbf{Cayley graph} $\Gamma = \mathrm{Cay}(G,\Sigma)$ is the undirected graph whose vertex set is $G$ and two vertices $g,h\in G$ are adjacent whenever $h= \sigma g$ for some $\sigma\in \Sigma$.
The graph $\Gamma$ is clearly $|\Sigma|$-regular.
Furthermore, $\Gamma$ is connected if and only if $\Sigma$ generates $G$.

A permutation $\sigma\in S_n$ is called \textbf{$m$–reducible} if there exists a partition
$[n]=I_1\cup\cdots\cup I_m$ into contiguous intervals with $\sigma(I_j)=I_j$ for all $j$.
Further, we call $\sigma$ \textbf{maximally $m$-reducible} if $m$ is the largest possible integer such that $\sigma$ is $m$-reducible.
Define
\[
\mathcal R_n(k) :=\{\sigma\in S_n: \sigma\ \text{is maximally }(n-k)\text{–reducible}\}.
\]
Dai~\cite[Theorem 1]{D1} showed that ${{F\!\!J}}(n,k) \cong \operatorname{Cay}(S_n,\,\mathcal R_n(k))$.
The approach we take towards a proof of Theorem~\ref{thm:main} requires us to consider the subgraph of $\operatorname{Cay}\left (S_n,\mathcal R_n(2)\right )$ generated by those $(n-2)$–reducible permutations that move the symbol $1$, explicitly, 
\[
  \mathcal R_n^\prime (2):=\{(1,2,3), (1,3,2), (1,3), (1,2)(3,4), (1,2)(4,5),\ldots,(1,2)(n-1,n)\}.
\]
In fact, in addition to  $\operatorname{Cay}\left (S_n,\mathcal R_n(2)\right )$, we will also show that $\operatorname{Cay}\left (S_n,\mathcal R^\prime_n(2)\right )$ has the Aldous property (see Lemma~\ref{lem:n1}).

\subsection{Schreier graphs and quotient matrices}

Let $\Gamma=(V(\Gamma),E(\Gamma))$ be a simple undirected graph on $n$ vertices, and denote its adjacency matrix by $A_\Gamma$. 
A partition $\Pi = \{V_1, V_2, \dots, V_m\}$ of the vertex set $V(\Gamma)$ is called an \textbf{equitable partition} if, for every pair $i,j\in\{1,\dots ,m\}$, each vertex in $V_i$ has the same number $q_{ij}$ of neighbours in $V_j$.  
The matrix $Q_\Gamma(\Pi)=(q_{ij})_{m\times m}$ is called the \textbf{quotient matrix} of $\Gamma$ with respect to $\Pi$.

\begin{lemma}[{Godsil and Royle~\cite{GR}, Theorem 9.3.3}]
\label{lem:ep}
    Let $\Gamma$ be a graph and suppose that $\Pi$ is an equitable partition of $\Gamma$.
    Then $\det(xI-Q_\Gamma(\Pi))$ divides $\det(xI - A_\Gamma)$.
\end{lemma}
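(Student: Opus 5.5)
The plan is the standard linear-algebraic proof via the characteristic matrix of the partition. Write $\Pi=\{V_1,\dots,V_m\}$ and let $P$ be the $|V(\Gamma)|\times m$ \emph{characteristic matrix} of $\Pi$. Its $(v,j)$ entry is $1$ if $v\in V_j$ and $0$ otherwise. The columns of $P$ are nonzero, since each part is nonempty, and they have disjoint supports, so $P$ has full column rank $m$.

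The first step is to translate equitability into a matrix identity. For a vertex $v\in V_i$, the $(v,j)$ entry of $A_\Gamma P$ counts the neighbours of $v$ lying in $V_j$, which equals $q_{ij}$ by hypothesis. The $(v,j)$ entry of $PQ_\Gamma(\Pi)$ is $\sum_\ell P_{v\ell}q_{\ell j}=q_{ij}$. Hence
\[
A_\Gamma P = P\,Q_\Gamma(\Pi).
\]
In other words, the column space $W$ of $P$ is $A_\Gamma$-invariant, and the matrix of $A_\Gamma|_W$ in the basis given by the columns of $P$ is exactly $Q_\Gamma(\Pi)$.

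The second step is to extend this basis to a basis of the whole space. Complete the columns of $P$ to a basis of $\mathbb{R}^{V(\Gamma)}$ and form the invertible matrix $S=[\,P\mid R\,]$. Because $W$ is invariant, $S^{-1}A_\Gamma S$ is block upper triangular:
\[
S^{-1}A_\Gamma S=\begin{pmatrix} Q_\Gamma(\Pi) & *\\ 0 & B\end{pmatrix}
\]
for some square matrix $B$. Similar matrices have equal characteristic polynomials, and the characteristic polynomial of a block triangular matrix factors over its diagonal blocks. Therefore
\[
\det(xI-A_\Gamma)=\det(xI-Q_\Gamma(\Pi))\det(xI-B),
\]
which gives the claimed divisibility.

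No step is a genuine obstacle. The only point needing care is checking the identity $A_\Gamma P=PQ_\Gamma(\Pi)$ entrywise with the correct index convention: rows of $Q_\Gamma(\Pi)$ are indexed by the part containing the vertex, and columns by the target part. One could instead use symmetry of $A_\Gamma$ and the orthogonal complement of $W$ to obtain a block-diagonal form, but the triangular form already suffices.
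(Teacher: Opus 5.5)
Your proof is correct, and it is essentially the standard argument behind the cited result: the paper does not reprove this lemma but cites Godsil and Royle (Theorem 9.3.3). Their proof likewise derives $A_\Gamma P = P\,Q_\Gamma(\Pi)$ from the characteristic matrix, extends the columns of $P$ to a basis, and reads the divisibility off the resulting block upper triangular form.
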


For $k \in [n]$, denote by $\operatorname{Stab}_n(k)$ the stabiliser subgroup of $S_n$ that fixes the element $k$.
Define $\Pi_n$ to be the set of left cosets of $\operatorname{Stab}_n(1)$.
Then each part of $\Pi_n$ is a left coset of the point-stabiliser of $S_n$, that is, $\Pi_n:=\{V_1,\dots,V_n\}$ where $V_i = (1i)\operatorname{Stab}_n(1)$ for each $i \in \{1,\dots,n\}$.
See Figure~\ref{fig:FJ(4,2)} a drawing of the graph ${{F\!\!J}}(4,2)$ with vertices grouped according to the point-stabiliser partition $\Pi_4$.

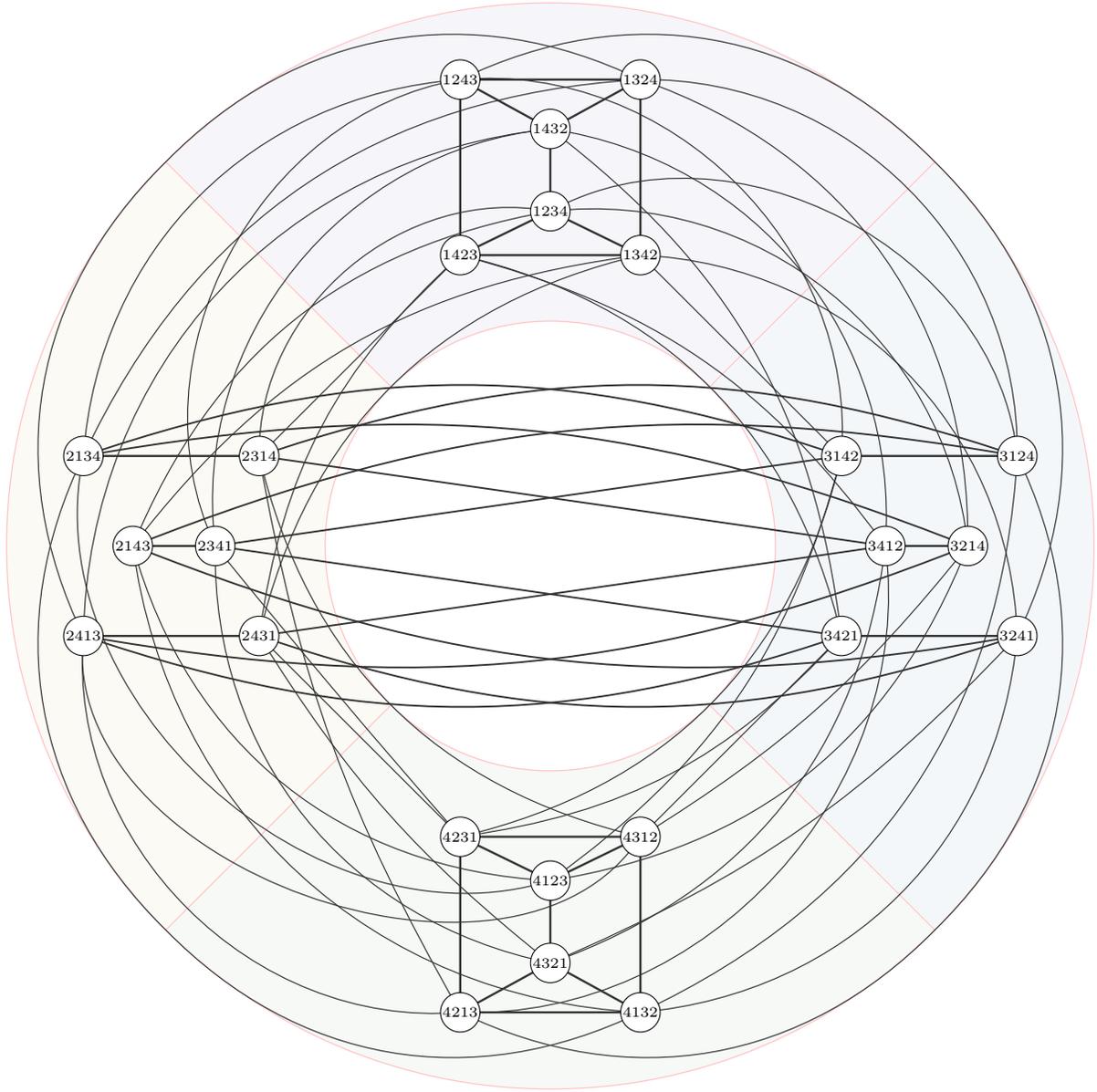
\begin{figure}[htbp]
\centering
\begin{tikzpicture}[line cap=round,line join=round,scale=1.6]

  \definecolor{fjviolet}{RGB}{186,176,214}
  \definecolor{fjblue}{RGB}{171,189,217}
  \definecolor{fjgreen}{RGB}{184,210,184}
  \definecolor{fjyellow}{RGB}{222,214,164}

  \tikzset{
    fjbg/.style={line width=0.55pt, draw=black!0, fill opacity=0.30},
    fjdivider/.style={draw=pink!90, line width=0.5pt},
    fjinner/.style={draw=black!80, line width=0.9pt},
    fjinter/.style={draw=black!80, line width=0.45pt},
    fjinter2/.style={draw=blue!90, line width=0.92pt},
    fjinterbundle/.style={draw=black!80, line width=0.72pt}
  }

  \def\Outer{4.95}   
  \def\Inner{2.05}

\path[fjbg,fill=fjviolet!40!white,even odd rule]
    (0,0) -- (45:\Outer) arc(45:135:\Outer) -- cycle
    (0,0) -- (45:\Inner) arc(45:135:\Inner) -- cycle;
  
\path[fjbg,fill=fjyellow!40!white,even odd rule]
    (0,0) -- (135:\Outer) arc(135:225:\Outer) -- cycle
    (0,0) -- (135:\Inner) arc(135:225:\Inner) -- cycle;

\path[fjbg,fill=fjgreen!40!white,even odd rule] 
    (0,0) -- (225:\Outer) arc(225:315:\Outer) -- cycle
    (0,0) -- (225:\Inner) arc(225:315:\Inner) -- cycle;

\path[fjbg,fill=fjblue!40!white,even odd rule]
    (0,0) -- (315:\Outer) arc(315:405:\Outer) -- cycle
    (0,0) -- (315:\Inner) arc(315:405:\Inner) -- cycle;

  \draw[fjdivider] (0,0) circle (\Outer);
  \draw[fjdivider] (0,0) circle (\Inner);

  \draw[fjdivider] (45:\Inner) -- (45:\Outer);
  \draw[fjdivider] (135:\Inner) -- (135:\Outer);
  \draw[fjdivider] (225:\Inner) -- (225:\Outer);
  \draw[fjdivider] (315:\Inner) -- (315:\Outer);

  \coordinate (v1_0) at (0.00,3.05);
  \coordinate (v1_2) at (-0.82,4.25);
  \coordinate (v1_1) at (0.82,4.25);
  \coordinate (v1_4) at (-0.82,2.65);
  \coordinate (v1_3) at (0.82,2.65);
  \coordinate (v1_5) at (0.00,3.80);

  \coordinate (v2_0) at (-3.55,0.85);
  \coordinate (v2_1) at (-4.25,0.43);
  \coordinate (v2_2) at (-2.85,0.43);
  \coordinate (v2_3) at (-4.25,-0.43);
  \coordinate (v2_4) at (-2.85,-0.43);
  \coordinate (v2_5) at (-3.55,-0.85);

\coordinate (v2_0) at (-3.80,0);
  \coordinate (v2_2) at (-4.25,0.82);
  \coordinate (v2_4) at (-2.65,0.82);
  \coordinate (v2_1) at (-4.25,-0.82);
  \coordinate (v2_3) at (-2.65,-0.82);
  \coordinate (v2_5) at (-3.05,0);

  \coordinate (v3_1) at (3.80,0);
  \coordinate (v3_2) at (2.65,0.82);
  \coordinate (v3_0) at (4.25,0.82);
  \coordinate (v3_5) at (2.65,-0.82);
  \coordinate (v3_3) at (4.25,-0.82);
  \coordinate (v3_4) at (3.05,0);

 \coordinate (v4_5) at (-0.82,-2.65);
  \coordinate (v4_4) at (-0.82,-4.25);
  \coordinate (v4_3) at (0.00,-3.80);
  \coordinate (v4_2) at (0.00,-3.05);
  \coordinate (v4_1) at (0.82,-2.65);
  \coordinate (v4_0) at (0.82,-4.25);

  \coordinate (v2_2-4_0) at (-3.5,-3.5);
    \coordinate (v1_2-3_3) at (3.5,3.5);
    \coordinate (v1_1-2_1) at (-3.5,3.5);
    \coordinate (v3_0-4_4) at (3.5,-3.5);

  \draw[fjinter] (v1_0) edge [bend right = 28.7]  (v2_0);
  \draw[fjinter] (v1_0) edge [bend right = 50.7]  (v2_4);
  \draw[fjinter] (v1_1) edge [bend right = 35.7]  (v1_1-2_1);
  \draw[fjinter] (v1_1-2_1) edge [bend right = 35.7]  (v2_1);
  \draw[fjinter] (v1_1) edge [bend right = 30.7]  (v2_2);
  \draw[fjinter] (v1_2) edge [bend right = 40.7]  (v2_2);
  \draw[fjinter] (v1_2) edge [bend right = 52.7]  (v2_5);
  \draw[fjinter] (v1_3) edge [bend right = 22.5]  (v2_0);
  \draw[fjinter] (v1_3) edge [bend right = 31.7]  (v2_3);
  \draw[fjinter] (v1_4) edge [bend right = 19.7] (v2_3);
  \draw[fjinter] (v1_4) edge [bend right = 0] (v2_4);
  \draw[fjinter] (v1_5) edge [bend right = 41.5] (v2_1);
  \draw[fjinter] (v1_5) edge [bend right = 45.7] (v2_5);

  \draw[fjinter] (v1_0) edge [bend left = 56.7] (v3_0);
  \draw[fjinter] (v1_0) edge [bend left =45.2] (v3_1);
  \draw[fjinter] (v1_1) edge [bend left = 45.7] (v3_0);
  \draw[fjinter] (v1_1) edge [bend left = 35.7] (v3_1);
  \draw[fjinter] (v1_2) edge [bend left = 49.7] (v3_2);
  \draw[fjinter] (v1_2) edge [bend left = 35.7] (v1_2-3_3);
  \draw[fjinter] (v1_2-3_3) edge [bend left = 35.7] (v3_3);
  \draw[fjinter] (v1_3) edge [bend left = 0] (v3_2);
  \draw[fjinter] (v1_3) edge [bend left = 42.7] (v3_3);
  \draw[fjinter] (v1_4) edge [bend left = 19.5] (v3_4);
  \draw[fjinter] (v1_4) edge [bend left = 31.7] (v3_5);
  \draw[fjinter] (v1_5) edge [bend left = 42.7] (v3_4);
  \draw[fjinter] (v1_5) edge [bend left = 22.7] (v3_5);

  \def\CL{-1.35}
  \def\CR{ 1.35}

  \draw[fjinterbundle] (v2_0) edge [bend left = 15.7] (v3_0);
  \draw[fjinterbundle] (v2_0)  edge [bend right = 15.7] (v3_3);
  \draw[fjinterbundle] (v2_1)  edge [bend right = 15.7] (v3_1);
  \draw[fjinterbundle] (v2_1)  edge [bend right = 18.7] (v3_5);
  \draw[fjinterbundle] (v2_2)  edge [bend left = 15.7] (v3_1);
  \draw[fjinterbundle] (v2_2) edge [bend left = 18.7] (v3_2);
  \draw[fjinterbundle] (v2_3) edge [bend right = 18.7] (v3_3);
  \draw[fjinterbundle] (v2_3) edge [bend left = 0] (v3_4);
  \draw[fjinterbundle] (v2_4) edge [bend left = 18.7] (v3_0);
  \draw[fjinterbundle] (v2_4) edge [bend right = 0] (v3_4);
  \draw[fjinterbundle] (v2_5) edge [bend right = 0] (v3_2);
  \draw[fjinterbundle] (v2_5) edge [bend left = 0] (v3_5);

  \draw[fjinter] (v2_0) edge [bend right = 39.7] (v4_0);
  \draw[fjinter] (v2_0) edge [bend right = 35.7] (v4_2);
  \draw[fjinter] (v2_1) edge [bend right = 76.7] (v4_1);
  \draw[fjinter] (v2_1) edge [bend right = 49.7] (v4_4);
  \draw[fjinter] (v2_2) edge [bend right = 35.7] (v2_2-4_0);
  \draw[fjinter] (v2_2-4_0) edge [bend right = 35.7] (v4_0);
  \draw[fjinter] (v2_2) edge [bend right = 59.7] (v4_2);
  \draw[fjinter] (v2_3) edge [bend right = 9.7] (v4_3);
  \draw[fjinter] (v2_3) edge [bend right = 0] (v4_5);
  \draw[fjinter] (v2_4) edge [bend right = 31.7] (v4_1);
  \draw[fjinter] (v2_4) edge [bend right = 9.7] (v4_4);
  \draw[fjinter] (v2_5) edge [bend right = 39.7] (v4_3);
  \draw[fjinter] (v2_5) edge [bend right = 1] (v4_5);

  \draw[fjinter] (v3_0) edge [bend left = 30.7] (v4_0);
  \draw[fjinter] (v3_0) edge [bend left = 35.7] (v3_0-4_4);
  \draw[fjinter] (v3_0-4_4) edge [bend left = 35.7] (v4_4);
  \draw[fjinter] (v3_1) edge [bend left = 9.7] (v4_1);
  \draw[fjinter] (v3_1) edge [bend left = 29.7] (v4_2);
  \draw[fjinter] (v3_2) edge [bend left = 19.7] (v4_2);
  \draw[fjinter] (v3_2) edge [bend left = 31.7] (v4_5);
  \draw[fjinter] (v3_3) edge [bend left = 39.7] (v4_0);
  \draw[fjinter] (v3_3) edge [bend left = 11.7] (v4_3);
  \draw[fjinter] (v3_4) edge [bend left = 31.7] (v4_3);
  \draw[fjinter] (v3_4) edge [bend left = 49.7] (v4_4);
  \draw[fjinter] (v3_5) edge [bend left = 0] (v4_1);
  \draw[fjinter] (v3_5) edge [bend left = 19.7] (v4_5);

  \draw[fjinner] (v1_0) -- (v1_3);
  \draw[fjinner] (v1_0) -- (v1_4);
  \draw[fjinner] (v1_0) -- (v1_5);
  \draw[fjinner] (v1_2) -- (v1_1);
  \draw[fjinner] (v1_2) -- (v1_4);
  \draw[fjinner] (v1_2) -- (v1_5);
  \draw[fjinner] (v1_1) -- (v1_3);
  \draw[fjinner] (v1_1) -- (v1_5);
  \draw[fjinner] (v1_3) -- (v1_4);

  \draw[fjinner] (v2_2) -- (v2_4);
  \draw[fjinner] (v2_0) -- (v2_5);
  \draw[fjinner] (v2_1) -- (v2_3);

  \draw[fjinner] (v3_0) -- (v3_2);
  \draw[fjinner] (v3_1) -- (v3_4);
  \draw[fjinner] (v3_3) -- (v3_5);

  \draw[fjinner] (v4_2) -- (v4_5);
  \draw[fjinner] (v4_2) -- (v4_1);
  \draw[fjinner] (v4_2) -- (v4_3);
  \draw[fjinner] (v4_0) -- (v4_4);
  \draw[fjinner] (v4_0) -- (v4_1);
  \draw[fjinner] (v4_0) -- (v4_3);
  \draw[fjinner] (v4_4) -- (v4_5);
  \draw[fjinner] (v4_4) -- (v4_3);
  \draw[fjinner] (v4_5) -- (v4_1);

  \node[font=\tiny, inner sep=0.5pt, circle,thin,draw=black!99,fill=white!100, minimum width=8pt]      at (v1_0) {1234};
  \node[font=\tiny, inner sep=0.5pt, circle,thin,draw=black!99,fill=white!100, minimum width=8pt] at (v1_1) {1324};
  \node[font=\tiny, inner sep=0.5pt, circle,thin,draw=black!99,fill=white!100, minimum width=8pt] at (v1_2) {1243};
  \node[font=\tiny, inner sep=0.5pt, circle,thin,draw=black!99,fill=white!100, minimum width=8pt] at (v1_3) {1342};
  \node[font=\tiny, inner sep=0.5pt, circle,thin,draw=black!99,fill=white!100, minimum width=8pt] at (v1_4) {1423};
  \node[font=\tiny, inner sep=0.5pt, circle,thin,draw=black!99,fill=white!100, minimum width=8pt]      at (v1_5) {1432};

  \node[font=\tiny, inner sep=0.5pt, circle,thin,draw=black!99,fill=white!100, minimum width=8pt]      at (v2_0) {2143};
  \node[font=\tiny, inner sep=0.5pt, circle,thin,draw=black!99,fill=white!100, minimum width=8pt] at (v2_1) {2413};
  \node[font=\tiny, inner sep=0.5pt, circle,thin,draw=black!99,fill=white!100, minimum width=8pt] at (v2_2) {2134};
  \node[font=\tiny, inner sep=0.5pt, circle,thin,draw=black!99,fill=white!100, minimum width=8pt] at (v2_3) {2431};
  \node[font=\tiny, inner sep=0.5pt, circle,thin,draw=black!99,fill=white!100, minimum width=8pt] at (v2_4) {2314};
  \node[font=\tiny, inner sep=0.5pt, circle,thin,draw=black!99,fill=white!100, minimum width=8pt]      at (v2_5) {2341};

  \node[font=\tiny, inner sep=0.5pt, circle,thin,draw=black!99,fill=white!100, minimum width=8pt]      at (v3_0) {3124};
  \node[font=\tiny, inner sep=0.5pt, circle,thin,draw=black!99,fill=white!100, minimum width=8pt] at (v3_1) {3214};
  \node[font=\tiny, inner sep=0.5pt, circle,thin,draw=black!99,fill=white!100, minimum width=8pt] at (v3_2) {3142};
  \node[font=\tiny, inner sep=0.5pt, circle,thin,draw=black!99,fill=white!100, minimum width=8pt] at (v3_3) {3241};
  \node[font=\tiny, inner sep=0.5pt, circle,thin,draw=black!99,fill=white!100, minimum width=8pt] at (v3_4) {3412};
  \node[font=\tiny, inner sep=0.5pt, circle,thin,draw=black!99,fill=white!100, minimum width=8pt]      at (v3_5) {3421};

  \node[font=\tiny, inner sep=0.5pt, circle,thin,draw=black!99,fill=white!100, minimum width=8pt]      at (v4_0) {4132};
  \node[font=\tiny, inner sep=0.5pt, circle,thin,draw=black!99,fill=white!100, minimum width=8pt] at (v4_1) {4312};
  \node[font=\tiny, inner sep=0.5pt, circle,thin,draw=black!99,fill=white!100, minimum width=8pt] at (v4_2) {4123};
  \node[font=\tiny, inner sep=0.5pt, circle,thin,draw=black!99,fill=white!100, minimum width=8pt] at (v4_3) {4321};
  \node[font=\tiny, inner sep=0.5pt, circle,thin,draw=black!99,fill=white!100, minimum width=8pt] at (v4_4) {4213};
  \node[font=\tiny, inner sep=0.5pt, circle,thin,draw=black!99,fill=white!100, minimum width=8pt]      at (v4_5) {4231};

\end{tikzpicture}
\caption{The graph $F\!\!J(4,2)$ with vertices labelled by the image of 1234 under the action of the corresponding permutation.
The vertices are grouped with respect to the point-stabiliser partition $\Pi_4$.}
\label{fig:FJ(4,2)}
\end{figure}

\begin{lemma}[{\cite[Lemma 5]{HHC}}]
\label{lem:cosetsEP}
    Let $G$ be a finite group.
Then the set of left cosets of any subgroup $H \le G$ gives an equitable partition of $\operatorname{Cay}(G, \Sigma )$.
\end{lemma}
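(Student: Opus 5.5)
The plan is to exhibit, for each pair of cosets, a bijection between the neighbour sets in question. The bijection comes from the fact that right multiplication by an element of $H$ is a graph automorphism of $\operatorname{Cay}(G,\Sigma)$ that maps every left coset of $H$ to itself.

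First, I will check the automorphism claim. Fix $h \in H$ and define $\rho_h \colon G \to G$ by $\rho_h(g) = gh$. Since adjacency in $\operatorname{Cay}(G,\Sigma)$ is defined by left multiplication ($g \sim g'$ iff $g' = \sigma g$ for some $\sigma\in\Sigma$), we have $g' = \sigma g$ if and only if $g'h = \sigma (gh)$. Hence $\rho_h$ is a bijection that preserves adjacency and non-adjacency. Moreover, $\rho_h(xH) = xHh = xH$ for every left coset $xH$, so $\rho_h$ fixes each part of the partition setwise.

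Second, I will derive equitability. Let $xH$ and $yH$ be two left cosets, and let $g, g' \in xH$. Write $g' = gh$ with $h = g^{-1}g' \in H$. The number of neighbours of $g$ in $yH$ equals $|\{\sigma\in\Sigma : \sigma g \in yH\}|$. For each $\sigma$ we have $\sigma g' = (\sigma g)h$, and $(\sigma g)h \in yH$ if and only if $\sigma g \in yHh^{-1} = yH$. So the same set of $\sigma$'s works for $g$ and for $g'$, and the two counts coincide.

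Therefore the number $q_{xH,\,yH}$ depends only on the two cosets, which is exactly the definition of an equitable partition. I do not expect any real obstacle here. The only point needing care is the side convention: the partition must use left cosets precisely because adjacency is given by left multiplication by $\Sigma$, so that right multiplication by $H$ commutes with the edge relation. With right cosets the argument would fail for a non-normal generating set $\Sigma$.
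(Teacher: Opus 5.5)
Your proof is correct. The paper does not prove this lemma itself but quotes it from \cite[Lemma 5]{HHC}, and your argument is the standard one: the neighbours of $g\in xH$ lying in $yH$ are the elements $\sigma g$ with $\sigma\in\Sigma\cap yHg^{-1}=\Sigma\cap yHx^{-1}$, a set independent of the choice of $g\in xH$, which is exactly what your right-multiplication invariance shows. Your closing aside about right cosets is slightly loose: right cosets also give an equitable partition whenever $h^{-1}\Sigma h=\Sigma$ for all $h\in H$, so what matters there is invariance of $\Sigma$ under conjugation by $H$, not normality of $\Sigma$ in $G$.
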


The Schreier graph $\operatorname{Sch}(S_n, [n],\Sigma)$ is isomorphic to the graph with vertex set $\Pi_n = \{V_1,V_2,\dots,V_n\}$ where there is an arc from $V_i$ to $V_j$ for each $\sigma \in \Sigma$ and $y \in V_j$ satisfying $\sigma(y) = x$ for some fixed $x \in V_i$.

It follows from Lemma~\ref{lem:cosetsEP} that 
$\Pi_n$ is an equitable partition of both the Cayley graphs $\operatorname{Cay}(S_n,\mathcal R_n(2))$ and $\operatorname{Cay}(S_n,\mathcal R'_n(2))$.
Denote the corresponding quotient matrices by $\mathsf Q_n$ and $\mathsf Q^\prime_n$, respectively, i.e, 
\begin{align*}
    \mathsf Q_n &:= Q_{\operatorname{Cay}(S_n,\mathcal R_n(2))}(\Pi_n) \\
    \mathsf Q^\prime_n &:= Q_{\operatorname{Cay}(S_n,\mathcal R'_n(2))}(\Pi_n).
\end{align*}
It is straightforward to verify that the matrices $\mathsf Q_n$ and $\mathsf Q^\prime_n$ are adjacency matrices of the Schreier graphs $\operatorname{Sch}(S_n, [n],\mathcal R_n(2))$ and $\operatorname{Sch}(S_n, [n],\mathcal R'_n(2))$, respectively.
    
In \cite[Section 5]{HHC} the corresponding quotient matrices $\mathsf Q_n$ and $\mathsf Q_n^\prime$ are given as
\begin{equation*}%\label{eq:An}
\mathsf Q_n =  
\begin{bmatrix}
\frac{n^2-n-6}{2} & n-2 & 2 & 0 & \cdots & 0 & 0\\
n-2 & \frac{n^2-3n-2}{2} & n-2 & 2 & \ddots & \vdots & \vdots\\
2 & n-2 & \frac{n^2-3n-6}{2} & n-2 & \ddots & 0 & 0\\
0 & 2 & n-2 & \ddots & \ddots & 2& 0\\
\vdots & \ddots & \ddots & \ddots & \frac{n^2-3n-6}{2} & n-2 & 2\\
0 & \hdots & 0 & 2 & n-2 & \frac{n^2-3n-2}{2} & n-2\\
0 & \hdots & 0 & 0 & 2 & n-2 & \frac{n^2-n-6}{2}
\end{bmatrix}
\end{equation*}
and 
\[
\mathsf Q_n^\prime  = \begin{bmatrix}
0     & n-2 & 2   & 0   & \cdots & 0 & 0\\
n-2   & 1   & 1   & 0   & \cdots & 0 & 0\\
2     & 1   & n-4 & 1   & \ddots & 0 & 0 \\
0     & 0   & 1   & n-2 & \ddots & \ddots & \vdots \\
\vdots&\vdots&\ddots&\ddots&\ddots& 1 & 0 \\
0     & 0   & \ddots   & \ddots & 1    &  n-2 & 1 \\
0     & 0   & 0   & \cdots & 0 & 1    &  n-1
\end{bmatrix}.
\]

We will apply the following remark in Section~\ref{sec:eigen-inequality}.

\begin{remark}
    \label{rem:pf}
   Using the Gerschgorin theorem and the Perron-Frobenius theorem, it follows that both the matrices $\frac{n^2+n-6}{2}I - \mathsf Q_n$ and $nI - \mathsf Q_n^\prime$ are positive semidefinite with rank $n-1$.
\end{remark}

\subsection{Overview of the proof}
\label{sec:overview}

In this section, we outline and follow the proof strategy proposed by Huang, Huang, and Cioabă~\cite{HHC}.
Let $M$ be an $n \times n$ real symmetric matrix.
We write the eigenvalues of $M$ in non-increasing order as $\lambda_1(M)\geqslant\lambda_2(M)\geqslant\dots\geqslant\lambda_n(M)$.
To ease notation, for each $i \in [n]$, for a graph $\Gamma$, we define $\lambda_i(\Gamma) := \lambda_i(A_\Gamma)$.

Using the previous two subsections, in order to prove Theorem~\ref{thm:main}, it suffices to show that 
    \[
\lambda_2(\operatorname{Cay}(S_n,\mathcal R'_n(2))) = \lambda_2(\mathsf Q_{n}^\prime) \,\,\text{ and } \,\, \lambda_2(\operatorname{Cay}(S_n,\mathcal R_n(2))) = \lambda_2(\mathsf Q_{n}) .
    \]

We now record the following key theorem, which provides recursive inequalities for the second-largest eigenvalue of the quotient matrices $\mathsf Q_n$ and $\mathsf Q^\prime_n$.
We defer its proof until Section~\ref{sec:eigen-inequality}.

\begin{theorem}[{\cite[Inequality~21 and Inequality~22]{HHC}}]\label{thm:HHC-ineq}
Let $n\ge 5$. Then 
\begin{itemize}
    \item[(a)] $\lambda_2(\mathsf Q_n^\prime) \;>\; \lambda_2(\mathsf Q_{n-1}^\prime) + 1;$
    \item[(b)] $\lambda_2(\mathsf Q_n)\ >\ \lambda_2(\mathsf Q_{n-1}) + \lambda_2(\mathsf Q_n^\prime)$.
\end{itemize}
\end{theorem}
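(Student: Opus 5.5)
We pass to Laplacians, as suggested by Remark~\ref{rem:pf}. Put $L'_n := nI-\mathsf Q'_n$ and $L_n := d_nI-\mathsf Q_n$ with $d_n=\frac{n^2+n-6}{2}$. Both are positive semidefinite with kernel spanned by $\mathbf 1$. Write $\mu'_n$ and $\nu_n$ for their smallest nonzero eigenvalues, so that $\lambda_2(\mathsf Q'_n)=n-\mu'_n$ and $\lambda_2(\mathsf Q_n)=d_n-\nu_n$. Since $d_n-d_{n-1}=n$, the two inequalities become
\[
\text{(a)}\ \ \mu'_n<\mu'_{n-1}, \qquad \text{(b)}\ \ \nu_n<\nu_{n-1}+\mu'_n .
\]
Both are strict upper bounds on a smallest nonzero Laplacian eigenvalue. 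I would prove each by exhibiting a test vector $y\perp\mathbf 1$ whose Rayleigh quotient is below the right-hand side, using Courant--Fischer.

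\textbf{Part (a).} Reading off $\mathsf Q'_n$, the matrix $L'_n$ is the Laplacian of a weighted graph on $[n]$ with edge weights $w_{12}=n-2$, $w_{13}=2$, $w_{23}=1$, and $w_{i,i+1}=1$ for $3\le i\le n-1$. This is a triangle on $\{1,2,3\}$ with a unit-weight path $3,4,\dots,n$ attached. Passing from $n-1$ to $n$ does two things: it adds $1$ to $w_{12}$, and it attaches a pendant vertex $n$ to vertex $n-1$ with weight $1$. Hence
\[
y^\top L'_n y = y^\top(L'_{n-1}\oplus 0)y+(y_1-y_2)^2+(y_{n-1}-y_n)^2 .
\]
Let $x$ be a Fiedler vector of $L'_{n-1}$, set $\tilde y=(x,x_{n-1})$, and let $y=\tilde y-\frac{x_{n-1}}{n}\mathbf 1$ be its projection orthogonal to $\mathbf 1$. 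Then the numerator is $\mu'_{n-1}\|x\|^2+(x_1-x_2)^2$. The denominator is $\|x\|^2+x_{n-1}^2\bigl(1-\tfrac1n\bigr)$, using $\sum x_i=0$. It therefore suffices to prove
\[
(x_1-x_2)^2<\mu'_{n-1}\bigl(1-\tfrac1n\bigr)x_{n-1}^2 .
\]
For this I would use the eigen-equations of $x$ at vertices $1$ and $2$. Subtracting them gives $(2(n-3)+1)(x_1-x_2)$ in terms of $\mu'_{n-1}(x_1-x_2)$ and $(x_1-x_3),(x_2-x_3)$. This shows $|x_1-x_2|=O(\mu'_{n-1}\|x\|_\infty/n)$, because the heavy edge $\{1,2\}$ forces $x_1\approx x_2$. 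Separately, $|x_{n-1}|$ should be comparable to $\|x\|_\infty$, since a Fiedler vector of a path-like graph is monotone along the path and extremal at the pendant end. Combining the two estimates gives the required inequality.

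\textbf{Part (b).} The generators in $\mathcal R_n(2)\setminus\mathcal R'_n(2)$ fix $1$, and they are exactly the copy of $\mathcal R_{n-1}(2)$ acting on $\{2,\dots,n\}$. Hence
\[
L_n=L'_n+\bigl(0\oplus L_{n-1}\bigr),
\]
with $L_{n-1}$ placed on coordinates $2,\dots,n$. Weyl's inequality only yields $\nu_n\ge\mu'_n$ and $\nu_n\ge\nu_{n-1}$, which point the wrong way, so again I would use a test vector. Take a Fiedler vector $x$ of $L_{n-1}$ on $\{2,\dots,n\}$. Extend it by $y_1:=x_2$, which is natural because $1$ and $2$ are joined by weight $n-2$ in $\mathsf Q'_n$, and then project off $\mathbf 1$. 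The $L_{n-1}$ part contributes $\nu_{n-1}\|x\|^2$. The remaining task is to show
\[
y^\top L'_n y \le \mu'_n\|y\|^2 ,
\]
with strictness coming from the denominator gain $y_1^2$.

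I expect this last step to be the main obstacle. The quadratic form $y^\top L'_n y$ is at least $\mu'_n\|y\|^2$ for every $y\perp\mathbf 1$, so the inequality can only hold if $y$ is close to the Fiedler direction of $L'_n$. My first attempt would be to compute explicit near-eigenvectors: both $\mathsf Q_{n-1}$ (pentadiagonal and Toeplitz in the interior) and $\mathsf Q'_n$ (a path in the interior) have cosine-type approximate Fiedler vectors, $x_i\approx\cos\frac{(i-\frac12)\pi}{n}$. I would verify the required inequality for these profiles directly. The small boundary corrections would be handled by the eigen-equations at the end rows, as in part (a). If that fails, the fallback is to reverse the roles: use the Fiedler vector of $L'_n$ as the test vector and bound its $L_{n-1}$-energy by $\nu_{n-1}$ plus an error absorbed by the denominator.
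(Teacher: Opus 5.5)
There is a genuine gap in part (b), and part (a) is only sketched.

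\textbf{Part (b).} Your reduction to $\nu_n<\nu_{n-1}+\mu'_n$ and the splitting $L_n=L'_n+(0\oplus L_{n-1})$ are correct, but the step you flag is not merely hard: as formulated it is false.

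\begin{itemize}
\item For $y\perp\mathbf 1$, Courant--Fischer gives $y^\top L'_n y\ge\mu'_n\|y\|^2$, with equality only when $y$ lies in the $\mu'_n$-eigenspace of $L'_n$. Your extension of the Fiedler vector of $L_{n-1}$ does not lie there.
\item What you would really need is the coupled estimate $y^\top L'_n y-\mu'_n\|y\|^2<\nu_{n-1}(1-\tfrac1n)x_2^2$. This is a second-order comparison between eigenvectors of two different matrices. Nothing in the proposal proves it, and your fallback needs the same kind of comparison.
\item Your aside that Weyl gives $\nu_n\ge\nu_{n-1}$ is also wrong. The matrix $0\oplus L_{n-1}$ has a two-dimensional kernel, so $L_n\succeq 0\oplus L_{n-1}$ only yields $\mu_3(L_n)\ge\nu_{n-1}$. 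In fact $\nu_n<\nu_{n-1}$: for instance $\nu_4=4$, while the test vector $x_i=i-3$ gives $\nu_5\le 3.6$.
\end{itemize}

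This misconception hides the missing idea. The Schreier graph of $\mathcal R'_n(2)$ is connected, so $\mu'_n>0$. Hence it suffices to prove $\nu_n\le\nu_{n-1}$, and $\mathsf Q'_n$ drops out of (b) entirely.

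The paper proves $\nu_n<\nu_{n-1}$ with the same append-a-vertex test vector you use in (a).
\begin{itemize}
\item Take a unit Fiedler vector $\mathbf v$ of $\mathfrak L(\mathsf Q_n)$ and set $d_i=v_{i+1}-v_i$. The target $\mu_2(\mathfrak L(\mathsf Q_{n+1}))<\mu$ then reduces to $\sum_i d_i^2+2d_{n-1}^2<\frac{n}{n+1}\mu v_n^2$.
\item The structural input is that $\mathsf Q_n$ is an irreducible Robinson matrix and $\mathfrak L(\mathsf Q_n)$ is centrosymmetric. So the (simple) Fiedler vector is monotone and skew-symmetric.
\item Consequently the cross terms in $\mu=(n+2)\sum d_i^2-2(d_1^2+d_{n-1}^2)+4\sum d_id_{i+1}$ are nonnegative, and $d_1=d_{n-1}$.
\item The last eigen-equation gives $(n-2)d_{n-1}\le\mu v_n$.
\item Together with $v_n^2\ge 1/n$ and $\mu\le 4$, this closes the argument for $n\ge 15$. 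Smaller $n$ are checked by computer.
\end{itemize}

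\textbf{Part (a).} This follows the paper's route: the same test vector and the same target $(x_1-x_2)^2<\mu'_{n-1}(1-\tfrac1n)x_{n-1}^2$. However, both of your estimates are only asserted.

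\begin{itemize}
\item Subtracting the first two eigen-equations gives $(2n-5-\mu)(x_1-x_2)=-(x_1-x_3)$, which carries no factor $\mu$. Without that factor you would need a sharp lower bound on $\mu'_{n-1}$. You must also use the sum of the two rows. In the paper's indexing, solving the $2\times2$ system gives $v_2-v_1=\mu v_3/\det M$ with $\det M>n/2$.
\item Your claim ``monotone along the path, extremal at the pendant end'' is a heuristic, and the graph is not a tree. The paper instead solves the recurrence $v_{i+1}=(2-\mu)v_i-v_{i-1}$ with boundary condition $v_{n-1}=(1-\mu)v_n$. This gives $v_3=\frac{\cos((2n-5)\theta/2)}{\cos(\theta/2)}v_n$, hence $|v_3|\le\frac{2}{\sqrt3}|v_n|$.
\item Both steps need the a priori bound $\mu\le1$, which comes from interlacing with $2I-A_{P_{n-4}}$ for $n\ge9$. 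The cases $n\le8$ are verified by computer. Your sketch mentions neither.
\end{itemize}
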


Huang, Huang, and Cioabă~\cite{HHC} proposed to use the following lemma, known as the `graph covering method', which bounds eigenvalues of a graph that are not eigenvalues of a quotient matrix of a certain equitable partition.

Given a square matrix $M$, we denote by $\Lambda(M)$ the set of its eigenvalues.

\begin{lemma}[{\cite[Theorem 7]{HHC}}]\label{lem:graph-covering-highly-trans}
Let $\Gamma = \operatorname{Cay}(S_n,\Sigma)$.
If $\lambda \in \Lambda(A_\Gamma) \setminus \Lambda(Q_\Gamma(\Pi_n))$ then,
for each $k\in[n]$, 
\[
\lambda\;\le\;
\lambda_{2}\bigl(\operatorname{Cay}(\operatorname{Stab}_n(k),\,\Sigma\cap \operatorname{Stab}_n(k))\bigr)\;+\;
\lambda_{2}\bigl(\operatorname{Cay}\bigl(S_n,\,\Sigma\setminus(\Sigma\cap \operatorname{Stab}_n(k))\bigr)\bigr).
\]
\end{lemma}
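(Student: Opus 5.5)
The plan is to split $A_\Gamma$ as a sum of two Cayley-graph adjacency matrices and bound the Rayleigh quotient of a suitable $\lambda$-eigenvector for each summand separately. Fix $k\in[n]$ and put $\Sigma_1:=\Sigma\cap\operatorname{Stab}_n(k)$ and $\Sigma_2:=\Sigma\setminus\Sigma_1$. Both sets are closed under inverses, so $A_\Gamma=A_1+A_2$ with $A_1$ the adjacency matrix of $\operatorname{Cay}(S_n,\Sigma_1)$ and $A_2$ that of $\operatorname{Cay}(S_n,\Sigma_2)$.

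\emph{Structure of the two summands.} In $\operatorname{Cay}(S_n,\Sigma_1)$, the vertex $g$ is joined only to vertices $\sigma g$ with $\sigma\in\operatorname{Stab}_n(k)$. Hence $A_1$ is block diagonal with respect to the right cosets $\operatorname{Stab}_n(k)g$. Each block is isomorphic, via $x\mapsto xg^{-1}$, to the adjacency matrix of $\operatorname{Cay}(\operatorname{Stab}_n(k),\Sigma_1)$. This block graph is regular with the constant vector as a top eigenvector. Consequently, if $f$ sums to zero on every right coset $\operatorname{Stab}_n(k)g=\{x: x(g^{-1}(k))=k\}$, then $f^{\transpose}A_1f\le \lambda_2(\operatorname{Cay}(\operatorname{Stab}_n(k),\Sigma_1))\,\|f\|^2$. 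This holds blockwise by the Courant--Fischer theorem, and it remains valid when a block is disconnected, since $\lambda_2$ is counted with multiplicity. Similarly, $A_2$ is $|\Sigma_2|$-regular with top eigenvector $\mathbf 1$, so $f\perp\mathbf 1$ gives $f^{\transpose}A_2f\le\lambda_2(\operatorname{Cay}(S_n,\Sigma_2))\,\|f\|^2$.

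\emph{Choosing the eigenvector.} It remains to produce a $\lambda$-eigenvector $f$ orthogonal to the indicator vectors $\chi_{j,i}(x)=[x(j)=i]$ for all $i,j$. These span all the coset indicators required above: $\mathbf 1=\sum_i\chi_{j,i}$, and each right coset of $\operatorname{Stab}_n(k)$ is the support of some $\chi_{j,k}$. Let $L$ be the span of the indicators $\chi_{1,i}$ of the left cosets $V_i=\{x:x(1)=i\}$ of $\operatorname{Stab}_n(1)$. Because $\Pi_n$ is equitable (Lemma~\ref{lem:cosetsEP}), $L$ is $A_\Gamma$-invariant, and the restriction of $A_\Gamma$ to $L$ has eigenvalues $\Lambda(Q_\Gamma(\Pi_n))$. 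As $A_\Gamma$ is symmetric, $L^\perp$ is also invariant. Therefore the $\lambda$-eigenspace $E_\lambda$ splits along $L\oplus L^\perp$. Since $\lambda\notin\Lambda(Q_\Gamma(\Pi_n))$, its $L$-part is zero, and we get $E_\lambda\subseteq L^\perp$.

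The key extra observation is that $A_\Gamma$ commutes with right translations $R_h f(x)=f(xh)$, because adjacency is defined by left multiplication. Hence $E_\lambda$ is invariant under every $R_h$. For $f\in E_\lambda$ we also have $R_{h^{-1}}f\in E_\lambda\subseteq L^\perp$, and so
\[
\langle f,R_h\chi_{1,i}\rangle=\langle R_{h^{-1}}f,\chi_{1,i}\rangle=0 .
\]
Now $R_h\chi_{1,i}(x)=[xh(1)=i]=\chi_{h(1),i}(x)$. Since $S_n$ is transitive, $h(1)$ ranges over all of $[n]$, so every eigenvector $f\in E_\lambda$ is orthogonal to all $\chi_{j,i}$.

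\emph{Conclusion and main obstacle.} Take any nonzero $f\in E_\lambda$. Then
\[
\lambda\|f\|^2=f^{\transpose}A_1f+f^{\transpose}A_2f\le\bigl(\lambda_2(\operatorname{Cay}(\operatorname{Stab}_n(k),\Sigma_1))+\lambda_2(\operatorname{Cay}(S_n,\Sigma_2))\bigr)\|f\|^2,
\]
which is the claimed inequality. The main obstacle is the mismatch between the two partitions. The hypothesis only gives orthogonality to the left cosets of $\operatorname{Stab}_n(1)$, whereas the components of $A_1$ are right cosets of $\operatorname{Stab}_n(k)$. This is exactly where the right-translation invariance of the whole eigenspace, together with transitivity of $S_n$, has to be used, and it is why one must work with $E_\lambda$ as a whole rather than with a single eigenvector.
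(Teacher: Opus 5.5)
Your proof is correct and complete in the generality stated, that is, for an arbitrary inverse-closed $\Sigma$ that need not be closed under conjugation. The paper gives no argument for this lemma; it is quoted from \cite[Theorem~7]{HHC}, so there is no in-text proof to compare against, and your proposal supplies a self-contained one.

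Its skeleton is the natural covering-type argument:
\begin{enumerate}
\item split $\Sigma$ according to whether a generator fixes $k$;
\item note that the adjacency matrix of $\operatorname{Cay}(S_n,\Sigma\cap\operatorname{Stab}_n(k))$ is block diagonal, with each diagonal block a copy of that of $\operatorname{Cay}(\operatorname{Stab}_n(k),\Sigma\cap\operatorname{Stab}_n(k))$;
\item bound the two Rayleigh quotients of a non-quotient eigenvector separately.
\end{enumerate}

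The step that deserves emphasis is the one you flag yourself. The equitable partition $\Pi_n$ consists of the fibres of $x\mapsto x(1)$. The diagonal blocks above are indexed by the fibres of $x\mapsto x^{-1}(k)$, which are the right cosets of $\operatorname{Stab}_n(k)$. Your bridge between the two is valid:
\begin{itemize}
\item $A_\Gamma$ commutes with every right translation $R_h$, since adjacency is by left multiplication;
\item hence $E_\lambda\subseteq L^\perp$ forces $E_\lambda\perp R_hL$ for all $h$;
\item transitivity of $S_n$ then gives orthogonality to every $\chi_{j,i}$.
\end{itemize}

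For a conjugation-closed $\Sigma$ this step would be unnecessary. In that case the right cosets of $\operatorname{Stab}_n(k)$ already form an equitable partition of $\Gamma$ with the same quotient matrix as $\Pi_n$. However, the paper applies the lemma to the non-normal sets $\mathcal R_n(2)$ and $\mathcal R'_n(2)$ in Lemmas~\ref{lem:n1} and~\ref{lem:n}. So the non-normal version your argument establishes is exactly the one the paper needs.
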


Using Lemma~\ref{lem:graph-covering-highly-trans} together with Theorem~\ref{thm:HHC-ineq}, we can show that $\operatorname{Cay}(S_n,\mathcal R'_n(2))$ has the Aldous property.

\begin{lemma}[{\cite[Conjecture 24]{HHC}}]
\label{lem:n1}
    Let $n \ge 4$.
    Then
    \[
\lambda_2(\operatorname{Cay}(S_n,\mathcal R'_n(2))) = \lambda_2(\mathsf Q_{n}^\prime).
    \]
\end{lemma}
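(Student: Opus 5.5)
Since $\mathsf Q_n'$ is a quotient matrix of $\operatorname{Cay}(S_n,\mathcal R_n'(2))$ (Lemma~\ref{lem:cosetsEP} and Lemma~\ref{lem:ep}), its spectrum is contained in that of the Cayley graph. Both matrices have the same largest eigenvalue $n=|\mathcal R_n'(2)|$. Hence $\lambda_2(\operatorname{Cay}(S_n,\mathcal R'_n(2)))\ge \lambda_2(\mathsf Q_n')$. It remains to show that every eigenvalue $\lambda$ of the Cayley graph that is not an eigenvalue of $\mathsf Q_n'$ satisfies $\lambda\le\lambda_2(\mathsf Q_n')$. Eigenvalues that do belong to $\Lambda(\mathsf Q_n')$ are automatically fine. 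One should also check that $\lambda_1=n$ has multiplicity one, i.e.\ the graph is connected. This holds because $\mathcal R_n'(2)$ contains $(1,2,3)$ and $(1,2)(j,j+1)$ for $j\ge 3$, together with $(1,3)$, and these generate $S_n$.

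I will argue by induction on $n$, applying Lemma~\ref{lem:graph-covering-highly-trans} with the stabiliser of the point $k=n$. The set $\mathcal R_n'(2)\cap\operatorname{Stab}_n(n)$ is exactly $\mathcal R_{n-1}'(2)$: every generator fixes $n$ except $(1,2)(n-1,n)$. So $\operatorname{Cay}(\operatorname{Stab}_n(n),\mathcal R'_n(2)\cap\operatorname{Stab}_n(n))$ is a disjoint union of copies of $\operatorname{Cay}(S_{n-1},\mathcal R'_{n-1}(2))$, and its $\lambda_2$ equals $\lambda_2(\operatorname{Cay}(S_{n-1},\mathcal R'_{n-1}(2)))$. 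Even if the union has several components, only one copy is involved here since $\operatorname{Stab}_n(n)\cong S_{n-1}$. The remaining generating set is $\{(1,2)(n-1,n)\}$, a single involution, so that Cayley graph is a perfect matching with $\lambda_2=1$. Lemma~\ref{lem:graph-covering-highly-trans} therefore gives
\[
\lambda\le \lambda_2(\operatorname{Cay}(S_{n-1},\mathcal R'_{n-1}(2)))+1 .
\]
For $n\ge5$ the induction hypothesis replaces the first term by $\lambda_2(\mathsf Q'_{n-1})$. Theorem~\ref{thm:HHC-ineq}(a) then yields $\lambda<\lambda_2(\mathsf Q_n')$, which closes the induction.

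The base case is $n=4$, where the Cayley graph has $24$ vertices. It must be verified directly, either by computing the spectrum of $\operatorname{Cay}(S_4,\mathcal R'_4(2))$ or by block-diagonalising via the irreducible representations of $S_4$. Concretely, one computes the eigenvalues of $\sum_{\sigma}\rho(\sigma)$ for each irrep $\rho$ of degree at most $3$ and compares them with $\lambda_2(\mathsf Q'_4)$, the largest root of the $4\times4$ matrix $\mathsf Q_4'$. Note that $\mathsf Q'_4$ has a special form, since its generating set is $\{(123),(132),(13),(12)(34)\}$.

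I expect the main obstacle to be the inductive step, which rests entirely on Theorem~\ref{thm:HHC-ineq}(a); its proof is deferred and is the substantive work. Given that theorem, the remaining delicate points are bookkeeping. First, the stabiliser Cayley graph must be correctly identified with $\operatorname{Cay}(S_{n-1},\mathcal R'_{n-1}(2))$, including the relabelling that keeps the pattern of $\mathcal R'$. Second, the finite base case $n=4$ must be checked explicitly.
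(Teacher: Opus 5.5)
Your proposal is correct and is essentially the paper's proof. Like the paper, you apply Lemma~\ref{lem:graph-covering-highly-trans} with $k=n$, identify the stabiliser part with $\operatorname{Cay}(S_{n-1},\mathcal R'_{n-1}(2))$ and the remainder with $\frac{n!}{2}K_2$, and then close the induction with Theorem~\ref{thm:HHC-ineq}(a) and a direct check at $n=4$. Your explicit verification that $\mathcal R'_n(2)$ generates $S_n$, so that the eigenvalue $n$ is simple, usefully fills a step the paper leaves implicit; the only slips are cosmetic: the stabiliser Cayley graph is exactly one copy of $\Gamma_{n-1}$ (a genuine union of several copies would have $\lambda_2=n-1$), and $\lambda_2(\mathsf Q'_4)$ is the largest \emph{nontrivial} root of the characteristic polynomial, not the largest root.
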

\begin{proof}
    Let $\Gamma_n = \operatorname{Cay}(S_n,\mathcal R'_n(2))$.
We decompose the generating set $\mathcal{R}'_n(2)$ according to whether or not a generator
fixes $n$:
\begin{align*}
    \mathcal R'_n(2) &= (\mathcal R'_n(2) \cap \operatorname{Stab}_n(n) )\cup (\mathcal R'_n(2) \setminus \operatorname{Stab}_n(n)).
\end{align*}
Let $\lambda \in \Lambda(\Gamma_n) \setminus \Lambda(\mathsf Q_n^\prime)$.
Apply Lemma~\ref{lem:graph-covering-highly-trans} with
$\Sigma=\mathcal{R}'_n(2)$ and $k=n$ to obtain
% Apply Weyl's inequality to obtain
\begin{equation}\label{eq:nonquotient-FJ1}
  \lambda
  \ \le\
  \lambda_2\bigl(\operatorname{Cay}\left (\operatorname{Stab}_n(n),\mathcal R'_n(2) \cap \operatorname{Stab}_n(n)\right )\bigr) + \lambda_2\bigl(\operatorname{Cay}\left (S_n,\mathcal R'_n(2) \setminus \operatorname{Stab}_n(n) \right )\bigr).
\end{equation}

Observe that $\operatorname{Cay}\left (\operatorname{Stab}_n(n),\mathcal R'_n(2) \cap \operatorname{Stab}_n(n)\right ) = \operatorname{Cay}\left (S_{n-1},\mathcal R'_{n-1}(2) \right ) = \Gamma_{n-1}$ and $\operatorname{Cay}\left ( S_n,\mathcal R'_n(2) \setminus \operatorname{Stab}_n(n) \right ) = \frac{n!}{2} K_2$.
Hence, \eqref{eq:nonquotient-FJ1} becomes

\begin{equation}\label{eq:nonquotient-FJ12}
  \lambda
  \ \le\
  \lambda_2\bigl(\Gamma_{n-1}\bigr) + 1.
\end{equation}

Now we assume that $\lambda_2\bigl(\Gamma_{n-1}\bigr) = \lambda_2\bigl(\mathsf Q_{n-1}^\prime\bigr)$.
After applying Theorem~\ref{thm:HHC-ineq} (a), \eqref{eq:nonquotient-FJ12} becomes 
\begin{equation}\label{eq:nonquotient-FJ123}
  \lambda
  \ <\
  \lambda_2\bigl(\mathsf Q_{n}^\prime\bigr).
\end{equation}
Combining with Lemma~\ref{lem:ep}, we can deduce $\lambda_2(\Gamma_n)
  =
  \lambda_2\bigl(\mathsf Q_{n}^\prime\bigr)$ by induction, checking the base case $n = 4$ directly.
\end{proof}

Similarly, armed with Lemma~\ref{lem:n1} together with Lemma~\ref{lem:graph-covering-highly-trans} and Theorem~\ref{thm:HHC-ineq}, we can now state and prove the following lemma, which is equivalent to Theorem~\ref{thm:main}.

\begin{lemma}[{\cite[Conjecture 23]{HHC}}]
\label{lem:n}
    Let $n \ge 4$.
    Then
    \[
    \lambda_2(\operatorname{Cay}(S_n,\mathcal R_n(2))) = \lambda_2(\mathsf Q_{n}).
    \]
\end{lemma}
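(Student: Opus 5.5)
We argue by induction on $n$, following the proof of Lemma~\ref{lem:n1} but now splitting $\mathcal R_n(2)$ according to the point $1$ rather than $n$. Put $\Gamma_n=\operatorname{Cay}(S_n,\mathcal R_n(2))$. Among the maximally $(n-2)$-reducible permutations, those that do not fix $1$ are exactly the elements of $\mathcal R'_n(2)$. Indeed, such a permutation has a single non-trivial block, either of size $3$ or two blocks of size $2$, and that block must contain $1$. This gives
\[
\mathcal R_n(2)\setminus\operatorname{Stab}_n(1)=\mathcal R'_n(2),
\]
while the generators fixing $1$ are the maximally $(n-2)$-reducible permutations of $\{2,\dots,n\}$. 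The interval structure is preserved under the shift $i\mapsto i-1$, so
\[
\operatorname{Cay}\bigl(\operatorname{Stab}_n(1),\mathcal R_n(2)\cap\operatorname{Stab}_n(1)\bigr)\cong\operatorname{Cay}(S_{n-1},\mathcal R_{n-1}(2))=\Gamma_{n-1}.
\]
I will check this identification carefully: a permutation fixing $1$ that is maximally $(n-2)$-reducible on $[n]$ has $\{1\}$ as a singleton block, so it is maximally $(n-3)$-reducible on $\{2,\dots,n\}$, and the converse also holds.

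Let $\lambda\in\Lambda(A_{\Gamma_n})\setminus\Lambda(\mathsf Q_n)$. Apply Lemma~\ref{lem:graph-covering-highly-trans} with $\Sigma=\mathcal R_n(2)$ and $k=1$. This gives
\[
\lambda\le\lambda_2(\Gamma_{n-1})+\lambda_2\bigl(\operatorname{Cay}(S_n,\mathcal R'_n(2))\bigr).
\]
By the induction hypothesis, $\lambda_2(\Gamma_{n-1})=\lambda_2(\mathsf Q_{n-1})$. By Lemma~\ref{lem:n1}, the second term equals $\lambda_2(\mathsf Q'_n)$. Theorem~\ref{thm:HHC-ineq}(b) then yields $\lambda<\lambda_2(\mathsf Q_n)$ for $n\ge5$.

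It remains to conclude. By Lemma~\ref{lem:ep}, every eigenvalue of $\mathsf Q_n$ is an eigenvalue of $A_{\Gamma_n}$, with multiplicity at most its multiplicity in $A_{\Gamma_n}$. Hence $\lambda_2(\Gamma_n)\ge\lambda_2(\mathsf Q_n)$: the top eigenvalue of both is the degree, which is simple for $A_{\Gamma_n}$ since $\Gamma_n$ is connected, and so $\lambda_2(\mathsf Q_n)$ lies among the remaining eigenvalues. Conversely, every eigenvalue of $A_{\Gamma_n}$ below the top one is either an eigenvalue of $\mathsf Q_n$, hence at most $\lambda_2(\mathsf Q_n)$, or is strictly smaller than $\lambda_2(\mathsf Q_n)$ by the bound above. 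Therefore $\lambda_2(\Gamma_n)=\lambda_2(\mathsf Q_n)$.

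The base case $n=4$ will be checked directly. $\Gamma_4$ is the $24$-vertex graph of Figure~\ref{fig:FJ(4,2)}, and $\mathsf Q_4$ is a $4\times4$ matrix, so we compute both spectra. Connectivity of $\Gamma_n$ follows because $\mathcal R_n(2)$ contains the adjacent transpositions $(1,3)$-type elements together with the 3-cycles, which generate $S_n$; failing that, one can appeal to Dai's identification with the connected graph $F\!\!J(n,2)$.

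The real difficulty is Theorem~\ref{thm:HHC-ineq}(b), which is deferred. Within this lemma, the main points needing care are the exact identification of the stabiliser subgraph with $\Gamma_{n-1}$ and the base case computation.
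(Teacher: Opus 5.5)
Your proposal follows the paper's proof essentially step for step: split $\mathcal R_n(2)$ at the point $1$, apply Lemma~\ref{lem:graph-covering-highly-trans} with $k=1$, identify the stabiliser part with $\operatorname{Cay}(S_{n-1},\mathcal R_{n-1}(2))$, and close the induction from the base case $n=4$ using Lemma~\ref{lem:n1}, Theorem~\ref{thm:HHC-ineq}(b) and Lemma~\ref{lem:ep}; your use of connectivity, so that the degree is a simple eigenvalue of $A_{\Gamma_n}$, spells out a step the paper leaves implicit. One slip: $\mathcal R_n(2)$ contains no adjacent transpositions $(i,i+1)$, since those lie in $\mathcal R_n(1)$, but connectivity still holds because $(1,2,3)(1,3)(1,2,3)^{-1}=(1,2)$, and $(1,2)$ together with the transpositions $(i,i+2)$ generates $S_n$.
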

\begin{proof}
   Let $\Gamma_n = \operatorname{Cay}(S_n,\mathcal R_n(2))$.
We decompose the generating set $\mathcal{R}_n(2)$ according to whether or not a generator
fixes $1$:
\[
  \mathcal{R}_n(2)
  = \mathcal{R}'_n(2) \cup \bigl(\mathcal{R}_n(2)\setminus \mathcal{R}'_n(2)\bigr) = \mathcal{R}'_n(2) \cup \bigl(\mathcal{R}_n(2)\cap \operatorname{Stab}_n(1)\bigr),
\]

Let $\lambda \in \Lambda(\Gamma_n) \setminus \Lambda(\mathsf Q_n)$.
Apply Lemma~\ref{lem:graph-covering-highly-trans} with $\Sigma=\mathcal{R}_n(2)$, and $k=1$,
we obtain 
\begin{equation}\label{eq:nonquotient-FJ}
  \lambda
  \ \le\
  \lambda_2\bigl(\operatorname{Cay}(\operatorname{Stab}_n(1),\mathcal{R}_n(2)\cap \operatorname{Stab}_n(1))\bigr)
  + \lambda_2\bigl(\operatorname{Cay}(S_n,\mathcal{R}'_n(2))\bigr).
\end{equation}

Since $\operatorname{Cay}(\operatorname{Stab}_n(1),\mathcal{R}_n(2)\cap\operatorname{Stab}_n(1)) = \operatorname{Cay}(S_{n-1},\mathcal{R}_{n-1}(2))$, \eqref{eq:nonquotient-FJ} becomes
\begin{equation}
    \label{eq:44}
      \lambda
  \ \le\
  \lambda_2\bigl(\operatorname{Cay}(S_{n-1},\mathcal{R}_{n-1}(2))\bigr) + \lambda_2\bigl(\operatorname{Cay}(S_n,\mathcal R'_n(2))\bigr).
\end{equation}

Assume for some $n\ge 5$, the induction hypothesis
\[
  \lambda_2\bigl(\operatorname{Cay}(S_{n-1},\mathcal{R}_{n-1}(2))\bigr) = \lambda_2\bigl(\mathsf Q_{n-1}\bigr).
\]
Together with Lemma~\ref{lem:n1}, \eqref{eq:44} becomes
\[
  \lambda
  \ \le\
  \lambda_2\bigl(\mathsf Q_{n-1}\bigr) + \lambda_2\bigl(\mathsf Q_n^\prime\bigr).
\]
Now apply Theorem~\ref{thm:HHC-ineq} (b) to obtain $\lambda < \lambda_2\bigl(\mathsf Q_n\bigr)$.

Now Lemma~\ref{lem:ep} yields $\lambda_2\bigl(\operatorname{Cay}(S_n,\mathcal R_n(2))\bigr) = \lambda_2\bigl(\mathsf Q_n\bigr)$ by induction, verifying the base case $n = 4$ directly.
\end{proof}

\section{Two eigenvalue inequalities}\label{sec:eigen-inequality}

In this section, we prove Theorem~\ref{thm:HHC-ineq}, thus completing the proof of Theorem~\ref{thm:main}.
We treat each of the two inequalities of Theorem~\ref{thm:HHC-ineq} separately.
For notational convenience, we denote the eigenvalues of an $n \times n$ real symmetric matrix $M$ in non-decreasing order as $\mu_1(M)\leqslant\mu_2(M)\leqslant\cdots\leqslant\mu_n(M)$.

Define $\mathbf 1_n \in \mathbb R^n$ to be the all-ones vector, that is, each entry of $\mathbf 1_n$ is equal to $1$.
Furthermore, we define the Laplacian operator $\mathfrak L(M)$ by $\mathfrak L(M):=\operatorname{diag}(M\mathbf 1_n)-M$.
We note here that $\mathfrak L(M) \mathbf 1_n = \mathbf 0_n$, the zero vector.

\subsection{The first inequality}

In this section, we prove Theorem~\ref{thm:HHC-ineq} (a), whose statement is equivalent to that of Proposition~\ref{prop:eigen-Ln1-drop}, below.
Note that
\[
\mathfrak L(\mathsf Q_n^\prime)  = \begin{bmatrix}
n     & 2-n & -2   & 0   & \cdots & 0 & 0\\
2-n   & n-1   & -1   & 0   & \cdots & 0 & 0\\
-2     & -1   & 4 & -1   & \ddots & 0 & 0 \\
0     & 0   & -1   & 2 & \ddots & \ddots & \vdots \\
\vdots&\vdots&\ddots&\ddots&\ddots& -1 & 0 \\
0     & 0   & \ddots   & \ddots & -1    &  2 & -1 \\
0     & 0   & 0   & \cdots & 0 & -1    &  1
\end{bmatrix}.
\]
We begin with a rough bound on the second-smallest eigenvalue of $\mathfrak L(\mathsf Q_n^\prime)$.

\begin{lemma}\label{lem:upperbound for mu2(Ln)}
Let $n \ge 6$.
Then
\[
\mu_2(\mathfrak L(\mathsf Q_n^\prime))\leqslant 2-2\cos\Bigl(\dfrac{2\pi}{n-3}\Bigr).
\]
In particular, for $n\ge 9$ one has $\mu_2(\mathfrak L(\mathsf Q_n^\prime))\leqslant 1$.
\end{lemma}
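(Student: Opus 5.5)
Since $\mathfrak L(\mathsf Q_n^\prime)$ is symmetric, positive semidefinite and has $\mathbf 1_n$ in its kernel, the Courant--Fischer theorem gives
\[
\mu_2(\mathfrak L(\mathsf Q_n^\prime)) \;=\; \min_{\substack{x\perp \mathbf 1_n\\ x\neq 0}} \frac{x^\transpose \mathfrak L(\mathsf Q_n^\prime)\,x}{x^\transpose x}.
\]
It therefore suffices to exhibit a single test vector $x\perp\mathbf 1_n$ whose Rayleigh quotient is at most $2-2\cos(2\pi/(n-3))$. The natural source of such a vector is the path-like tail of the matrix. On indices $3,\dots,n$ the matrix $\mathfrak L(\mathsf Q_n^\prime)$ looks like a path Laplacian: off-diagonal entries $-1$, diagonal $2$, and diagonal $1$ at index $n$. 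The only contamination comes from indices $1,2$, which couple to index $3$ and inflate its diagonal to $4$.

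I will choose $x$ supported on the indices $4,\dots,n$, so that $x_1=x_2=x_3=0$. The quadratic form is $x^\transpose\mathfrak L x=\sum_{ij}(-L_{ij})(x_i-x_j)^2$ over edges, and index $4$ is joined to index $3$ with weight $1$. With this support the form reduces to
\[
x_4^2+\sum_{i=4}^{n-1}(x_i-x_{i+1})^2 .
\]
This is the Laplacian of a path on the $n-3$ vertices $4,\dots,n$, with one extra Dirichlet-type term at the vertex $4$. To kill that term, also take $x_4=0$; then $x$ is a vector on the path $P_{n-4}$ formed by vertices $5,\dots,n$, plus a pendant term $x_5^2$.

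The cleanest route is to take a test vector for the path on $m=n-3$ vertices with zero Dirichlet boundary. Concretely, set $x_{3+j}=\sin(2\pi j/m)$ for $j=0,\dots,m$, taking the $j=m$ entry to be the vertex beyond $n$, which does not exist. I will adjust the construction so that one end is Dirichlet at vertex $3$ (value $0$) and the other end is Neumann at vertex $n$.

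A function satisfying these boundary conditions is $x_{3+j}=\sin\bigl(\theta(j)\bigr)$ with frequency chosen so that the discrete eigen-equation $-x_{i-1}+2x_i-x_{i+1}=\lambda x_i$ holds in the interior with $\lambda=2-2\cos\omega$. The step I expect to be the main obstacle is \textbf{orthogonality to $\mathbf 1_n$}: a single sine mode generally has nonzero sum. I would handle this by using a full period, $\omega=2\pi/(n-3)$, and placing the vector symmetrically so that the sine values sum to zero. For instance, take $x_{3+j}=\sin(2\pi j/(n-3))$ for $j=0,\dots,n-3$. The entries at $j=0$ and $j=n-3$ both vanish, so $x_3=x_n=0$ and $x\perp\mathbf 1_n$ because a full period of sines sums to zero.

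With $x_1=x_2=x_3=x_n=0$, the quadratic form equals the path Laplacian form $\sum_{j=0}^{n-4}(x_{3+j}-x_{4+j})^2$. This holds because the diagonal entry $1$ at index $n$ and the extra weights at index $3$ multiply zero entries. A standard trigonometric computation, using the identity $\sin((j+1)\omega)+\sin((j-1)\omega)=2\cos\omega\,\sin(j\omega)$ and summation by parts with vanishing endpoints, gives
\[
\sum_j (x_{3+j}-x_{4+j})^2=(2-2\cos\omega)\sum_j x_{3+j}^2 .
\]
This yields the bound. One must check that $x\neq0$, which holds for $n-3\ge3$, that is, $n\ge6$.

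The final claim is a numerical check: for $n\ge9$ we have $2\pi/(n-3)\le\pi/3$, so $2-2\cos(2\pi/(n-3))\le 2-2\cdot\tfrac12=1$.
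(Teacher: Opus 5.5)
Your proof is correct, and it differs from the paper's mainly in the tool used. The paper deletes the rows and columns indexed by $\{1,2,3,n\}$, notes that the remaining principal submatrix is $2I-A_{P_{n-4}}$, and then applies Cauchy interlacing, $\mu_2(\mathfrak L(\mathsf Q_n^\prime))\le \mu_2(2I-A_{P_{n-4}})$, together with the known spectrum of the path.

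Your test vector is supported on $\{4,\dots,n-1\}$ with entries $\sin(2\pi j/(n-3))$. It is exactly the zero-padded second eigenvector of that same submatrix, so both arguments rest on the same object. Where the paper uses interlacing, you use a single Rayleigh quotient. That is valid only because of two facts you correctly check: $\mathbf 1_n$ spans the bottom eigenspace of the positive semidefinite Laplacian, and this sine mode is antisymmetric about its midpoint, so it is orthogonal to $\mathbf 1_n$.

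Your route is self-contained: the summation-by-parts computation proves the path eigenvalue instead of citing it. The paper's route is shorter, and interlacing imposes no orthogonality condition on the test space.

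In a final write-up, cut the exploratory middle portion. The Dirichlet/Neumann digression is never used, and the remark about a ``vertex beyond $n$'' is a miscount, since $3+(n-3)=n$. Then define $x$ once, stating explicitly that $x_1=x_2=0$.
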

\begin{proof}
By deleting the rows and columns of $\mathfrak L(\mathsf Q_n^\prime)$ indexed by elements of $\{1,2,3,n\}$, one obtains $2I - A_{P_{n-4}}$, where $P_{n-4}$ is the path graph on $n-4$ vertices.

Using eigenvalue interlacing \cite[Corollary 1]{F} and \cite[Section 1.4.4]{spectra}, yields
\[
\mu_2(\mathfrak L(\mathsf Q_n^\prime)) \le  \mu_2(2I - A_{P_{n-4}}) \ =\ 2-2\cos\left(\frac{2\pi}{n-3}\right ).   \qedhere
\]
\end{proof}

Next, we obtain an upper bound for the difference of the first two entries of an $\mu_2(\mathfrak L(\mathsf Q_n^\prime))$-eigenvector of $\mathfrak L(\mathsf Q_n^\prime)$.

\begin{lemma}\label{lem:bound-v1v2}
Let $n\ge 9$ and $\mathbf v=(v_1,\dots,v_n)^\transpose$ be a $\mu_2(\mathfrak L(\mathsf Q_n^\prime))$-eigenvector of $\mathfrak L(\mathsf Q_n^\prime)$.
Then
\[
  |v_2-v_1|\ \le\ \frac{2\mu_2\!\left(\mathfrak L(\mathsf Q_n^\prime)\right)\left |\cos\left (\tfrac{(2n-5)\theta}{2}\right )\right |}{n\cos(\tfrac{\theta}{2})}\, |v_n|,
\]
for some $\theta\in (0,\pi/3]$.
\end{lemma}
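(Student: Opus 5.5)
The plan is to solve the eigenvalue equations of $\mathfrak L(\mathsf Q_n^\prime)$ explicitly. The path-like part of the matrix gives $v_3$ in terms of $v_n$ through a trigonometric formula. The two rows indexed by $1$ and $2$ then give $v_1-v_2$ in terms of $v_3$ by Cramer's rule.

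\textbf{Step 1: parametrising $\mu$.} Write $\mu=\mu_2(\mathfrak L(\mathsf Q_n^\prime))$. By Remark~\ref{rem:pf}, $\mathfrak L(\mathsf Q_n^\prime)=nI-\mathsf Q_n^\prime$ is positive semidefinite of rank $n-1$, so $\mu>0$. By Lemma~\ref{lem:upperbound for mu2(Ln)} with $n\ge 9$, $\mu\le 1$. Hence we may write $\mu=2-2\cos\theta$ with $\theta\in(0,\pi/3]$.

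\textbf{Step 2: solving the path part.} The rows indexed by $4,\dots,n-1$ read $-v_{i-1}+2v_i-v_{i+1}=\mu v_i$. The last row reads $-v_{n-1}+v_n=\mu v_n$, which is the same recurrence with the Neumann-type convention $v_{n+1}=v_n$. The general solution of this second-order recurrence is a combination of $\cos(i\theta)$ and $\sin(i\theta)$. Imposing $v_{n+1}=v_n$ forces
\[
v_i=\frac{\cos\bigl(\tfrac{(2n+1-2i)\theta}{2}\bigr)}{\cos(\tfrac{\theta}{2})}\,v_n ,\qquad 3\le i\le n,
\]
which is valid since $\cos(\theta/2)\neq 0$. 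One checks the formula at $i=n$ and $i=n+1$ and propagates it downwards. Row $4$ involves $v_3$, so the formula reaches $i=3$, giving
\[
v_3=\frac{\cos\bigl(\tfrac{(2n-5)\theta}{2}\bigr)}{\cos(\tfrac{\theta}{2})}\,v_n .
\]

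\textbf{Step 3: the first two rows.} Rows $1$ and $2$ form a $2\times 2$ linear system in $v_1,v_2$ with right-hand side depending on $v_3$:
\[
(n-\mu)v_1+(2-n)v_2=2v_3,\qquad (2-n)v_1+(n-1-\mu)v_2=v_3 .
\]
Its determinant is
\[
D=(n-\mu)(n-1-\mu)-(n-2)^2=3n-4-(2n-1)\mu+\mu^2 .
\]
Cramer's rule gives $v_1-v_2=-\mu v_3/D$; the numerators differ by exactly $-\mu v_3$.

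\textbf{Step 4: bounding $D$.} Since $0<\mu\le 1$, we have $D\ge 3n-4-(2n-1)=n-3\ge n/2$ for $n\ge 6$. In particular $D\neq 0$, so Step 3 is legitimate. Therefore
\[
|v_2-v_1|\le \frac{2\mu}{n}|v_3|,
\]
and substituting the expression for $v_3$ from Step 2 gives the stated bound.

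\textbf{Main obstacle.} The main care point is Step 2. One must correctly identify the boundary convention at the last row and the phase of the cosine, so that the index shift yields exactly $(2n-5)\theta/2$ at $i=3$. One must also confirm that rows $4$ through $n$ alone determine $v_3$, which holds because $v_1$ and $v_2$ do not appear in those rows. The algebra in Step 3 is routine but should be double-checked.
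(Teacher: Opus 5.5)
Your proposal is correct and follows essentially the paper's route. Cramer's rule on rows $1$--$2$ gives $v_2-v_1=\mu v_3/D$ with $D\ge n-3\ge n/2$, and the path recurrence from rows $4,\dots,n$, with $2-\mu=2\cos\theta$, yields $v_3=\cos\bigl(\tfrac{(2n-5)\theta}{2}\bigr)v_n/\cos(\tfrac{\theta}{2})$. Your convention $v_{n+1}=v_n$ simply packages the paper's fitting of $c_1,c_2$ and its trigonometric simplification into one step, and you make explicit the bound on $D$ that the paper only asserts.
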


\begin{proof}
Write $\mu=\mu_2(\mathfrak L(\mathsf Q_n^\prime))$.
Taking the first two rows of the eigenvalue equation $\mathfrak L(\mathsf Q_n^\prime)\mathbf v=\mu\,\mathbf v$ gives
\begin{equation}\label{eq:v1v2}
\begin{cases}
(n-\mu)\,v_1 + (2-n)\,v_2 &= 2\,v_3,\\[2pt]
(2-n)\,v_1 + (n-1-\mu)\,v_2 &= v_3.
\end{cases}
\end{equation}
This is a $2\times2$ linear system in $(v_1,v_2)$ with coefficient matrix
\[
M=
\begin{bmatrix}
n-\mu & 2-n\\
2-n   & n-1-\mu
\end{bmatrix}.
\]
Its determinant is
\[
\det M
=(n-\mu)(n-1-\mu)-(n-2)^2,
\]
which is nonzero by Lemma~\ref{lem:upperbound for mu2(Ln)}. Hence, the system \eqref{eq:v1v2} has a unique solution.

Solving \eqref{eq:v1v2} yields
\[
v_1
=
\frac{2(n-1-\mu)-(2-n)}{(n-\mu)(n-1-\mu)-(n-2)^2}\,v_3
=
\frac{3n-4-2\mu}{(n-\mu)(n-1-\mu)-(n-2)^2}\,v_3,
\]
\[
v_2
=
\frac{(n-\mu)-2(2-n)}{(n-\mu)(n-1-\mu)-(n-2)^2}\,v_3
=
\frac{3n-4-\mu}{(n-\mu)(n-1-\mu)-(n-2)^2}\,v_3.
\]
Therefore,
\[
    v_2-v_1
  =\frac{\mu}{(n-\mu)(n-1-\mu)-(n-2)^2}\,v_3.
\]
For $n\ge 9$, using Lemma~\ref{lem:upperbound for mu2(Ln)}, one finds that
\[
  (n-\mu)(n-1-\mu)-(n-2)^2 \;>\; \frac{n}{2}.
\]
Whence,
\begin{equation}
\label{eqn:ineq1}
    |v_2-v_1|
  =\frac{\mu}{(n-\mu)(n-1-\mu)-(n-2)^2}\,|v_3|
  \;\le\; \frac{2}{n}\,\mu\,|v_3|.
\end{equation}
From the eigenvalue equation $\mathfrak L(\mathsf Q_n^\prime)\mathbf v=\mu\,\mathbf v$, the rows with indices
$i=4,\dots,n-1$ give
\begin{equation}
    \label{eqn:rr}
     v_{i+1} = (2-\mu)v_i - v_{i-1}.
\end{equation}
The last row yields the boundary condition
\begin{equation}
    \label{eqn:bc}
     v_{n-1} = (1-\mu)v_n.
\end{equation}

Since $n\ge 9$, Lemma~\ref{lem:upperbound for mu2(Ln)} yields the inequality $0<\mu\le 1$.
Hence, the characteristic equation $r^2-(2-\mu)r+1=0$ of \eqref{eqn:rr} has 
roots $\exp(\pm \sqrt{-1}\theta)$ for some $\theta\in(0,\pi/3]$ that satisfies $2\cos\theta=2-\mu$.
Furthermore, for each $i\ge 3$, we can write
\[
v_i = c_1\cos((n-i)\theta)+c_2\sin((n-i)\theta),
\]
with $c_1= v_n$ and $c_1\cos(\theta)+c_2\sin(\theta)=(1-\mu)v_n$ from \eqref{eqn:bc}.
Therefore,
\begin{equation}
    \label{eqn:v31}
    v_3 = \cos((n-3)\theta)v_n+(\cos \theta-1)\frac{\sin((n-3)\theta)}{\sin \theta}v_n.
\end{equation}
Using standard trigonometric identities, \eqref{eqn:v31} can be expressed as
\[
v_3
= \frac{\cos\bigl(\tfrac{(2n-5)\theta}{2}\bigr)}{\cos(\tfrac{\theta}{2})}v_n.
\]
Combining with \eqref{eqn:ineq1} completes the proof. \qedhere
\end{proof}

Now we are ready to state and prove the main result of this subsection, which is equivalent to Theorem~\ref{thm:HHC-ineq} (a).

\begin{proposition}\label{prop:eigen-Ln1-drop}
Let $n\geqslant 4$. Then
\[
\mu_2(\mathfrak L(\mathsf Q_{n+1}^\prime))<\mu_2(\mathfrak L(\mathsf Q_{n}^\prime)).
\]  
\end{proposition}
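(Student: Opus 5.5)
The plan is a variational (Rayleigh quotient) argument for $n\ge 9$, with the finitely many cases $4\le n\le 8$ checked directly. Each row of $\mathsf Q_n^\prime$ sums to $n$, so $\mathfrak L(\mathsf Q_n^\prime)=nI-\mathsf Q_n^\prime$. The statement is therefore equivalent to Theorem~\ref{thm:HHC-ineq}(a). By Remark~\ref{rem:pf}, $\mu_2$ is the minimum of the Rayleigh quotient of $\mathfrak L(\mathsf Q_{m}^\prime)$ over nonzero vectors orthogonal to $\mathbf 1_m$. It thus suffices to build a test vector for $\mathfrak L(\mathsf Q_{n+1}^\prime)$ from a $\mu_2$-eigenvector of $\mathfrak L(\mathsf Q_n^\prime)$ whose Rayleigh quotient is strictly smaller than $\mu:=\mu_2(\mathfrak L(\mathsf Q_n^\prime))$.

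\textbf{Comparing the quadratic forms.} Both matrices are Laplacians of weighted graphs. Passing from $n$ to $n+1$ does two things. First, the weight of the edge $\{1,2\}$ grows from $n-2$ to $n-1$, and every other edge among the first $n$ vertices keeps its weight. Second, a new pendant edge $\{n,n+1\}$ of weight $1$ is attached at the end of the path. Let $\mathbf v$ be a $\mu$-eigenvector of $\mathfrak L(\mathsf Q_n^\prime)$, so $\mathbf v\perp\mathbf 1_n$. Extend it by duplicating the last entry: $\mathbf w=(v_1,\dots,v_n,v_n)^\transpose$. Then
\[
\mathbf w^\transpose\mathfrak L(\mathsf Q_{n+1}^\prime)\mathbf w=\mu\|\mathbf v\|^2+(v_1-v_2)^2 .
\]
Subtracting the mean, $\mathbf w'=\mathbf w-\tfrac{v_n}{n+1}\mathbf 1_{n+1}$ is orthogonal to $\mathbf 1_{n+1}$ and leaves the quadratic form unchanged. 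Its squared norm is $\|\mathbf v\|^2+\tfrac{n}{n+1}v_n^2$. Hence $\mu_2(\mathfrak L(\mathsf Q_{n+1}^\prime))<\mu$ will follow once we show
\[
(v_1-v_2)^2<\mu\,\tfrac{n}{n+1}\,v_n^2 .
\]

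\textbf{The key estimate.} This is exactly what Lemma~\ref{lem:bound-v1v2} is designed for. Since $\theta\in(0,\pi/3]$, we have $\cos(\theta/2)\ge\sqrt3/2$, and the cosine in the numerator is at most $1$ in absolute value. This gives $|v_2-v_1|\le \tfrac{4\mu}{\sqrt3\,n}|v_n|$, hence $(v_1-v_2)^2\le \tfrac{16\mu^2}{3n^2}v_n^2$. By Lemma~\ref{lem:upperbound for mu2(Ln)}, $0<\mu\le1$ for $n\ge 9$. So it remains to check $\tfrac{16\mu}{3n^2}<\tfrac{n}{n+1}$, which is immediate for $n\ge 9$.

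\textbf{The main obstacle is strictness.} We must ensure $v_n\neq0$, since otherwise both sides vanish. I would argue as follows. If $v_n=0$, then the boundary condition \eqref{eqn:bc} gives $v_{n-1}=0$. The recurrence \eqref{eqn:rr} then forces $v_i=0$ for all $i\ge3$. Since $\det M\ne0$, the system \eqref{eq:v1v2} gives $v_1=v_2=0$, contradicting $\mathbf v\ne\mathbf 0$. Finally, for $4\le n\le 8$ the matrices are explicit and small. I would verify those cases by direct computation of $\mu_2(\mathfrak L(\mathsf Q_n^\prime))$ for $4\le n\le 9$.
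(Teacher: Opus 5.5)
Your proposal is correct and follows the paper's proof essentially step for step. It uses the same padded test vector $(v_1,\dots,v_n,v_n)^\transpose-\frac{v_n}{n+1}\mathbf 1_{n+1}$ and the same reduction to $(v_1-v_2)^2<\frac{n}{n+1}\mu v_n^2$, proved via Lemma~\ref{lem:bound-v1v2} and the bound $\mu\le 1$ for $n\ge 9$, with a computer check for $4\le n\le 8$. Your explicit argument that $v_n\neq 0$ (via \eqref{eqn:bc}, \eqref{eqn:rr} and $\det M\neq 0$) is a worthwhile addition, since the paper's final strict inequality tacitly requires $\mu v_n^2>0$ without saying so.
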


\begin{proof}
Let $\mathbf v=(v_1,\dots,v_n)^\transpose$ be an eigenvector of $\mathfrak L(\mathsf Q_n^\prime)$
corresponding to the second-smallest eigenvalue $\mu=\mu_2(\mathfrak L(\mathsf Q_n^\prime))$.
Let $\mathbf x = (v_1,\dots,v_n,v_n)^\transpose -\frac{v_n}{n+1}\,\mathbf 1_{n+1}$.
    Since $\mathbf v$ is orthogonal to $\mathbf 1_n$, we also have $\mathbf x^\transpose \mathbf 1_{n+1} = 0$.
    Hence, using Remark~\ref{rem:pf}, we deduce $\mu_2(\mathfrak L(\mathsf Q_{n+1}^\prime)) \leqslant \frac{\mathbf x^\transpose \mathfrak L(\mathsf Q_{n+1}^\prime) \mathbf x}{\mathbf x^\transpose \mathbf x}$.
    We can express $\mathfrak L(\mathsf Q_{n+1}^\prime)$ as 
    \[
    \mathfrak L(\mathsf Q_{n+1}^\prime)
  =
  \begin{pmatrix}
    \mathfrak L(\mathsf Q_{n}^\prime) & 0\\ 0 & 0
  \end{pmatrix}
  +(\mathbf e_1-\mathbf e_2)(\mathbf e_1-\mathbf e_2)^\transpose
  +(\mathbf e_n-\mathbf e_{n+1})(\mathbf e_n-\mathbf e_{n+1})^\transpose,
    \]
    where $\mathbf e_i$ denotes the standard basis vector in $\mathbb R^{n+1}$ whose $i$th entry is $1$ and all other entries equal $0$.
    It is straightforward to check that 
    \[
    \frac{\mathbf x^\transpose \mathfrak L(\mathsf Q_{n+1}^\prime) \mathbf x}{\mathbf x^\transpose \mathbf x} = \frac{\mu_2(\mathfrak L(\mathsf Q_{n}^\prime))\,\mathbf v^\transpose \mathbf v + (v_1-v_2)^2}
         {\mathbf v^\transpose \mathbf v + \dfrac{n}{n+1}\,v_n^2},
    \]
It now suffices to prove
\begin{equation}\label{eq:target-ineq}
  (v_1-v_2)^2 \;\le\; \frac{n}{n+1}\,\mu\,v_n^2.
\end{equation}

Assume that $n \ge 9$.
By Lemma~\ref{lem:bound-v1v2} we have
\[
  (v_1-v_2)^2
  \ \le\ \frac{4}{n^2}\,\mu^2\,
        \frac{\cos^2\bigl(\tfrac{2n-5}{2}\theta\bigr)}{\cos^2(\tfrac{\theta}{2})}\,v_n^2.
\]
By Lemma~\ref{lem:upperbound for mu2(Ln)}, we have $0\le\mu\le 1$.
Thus $0<\theta\le \pi/3$ and hence
\[
  (v_1-v_2)^2
  \ \le\ \frac{4}{n^2}\,\mu^2\cdot \frac{1}{(\sqrt{3}/2)^2}\,v_n^2
  = \frac{16}{3n^2}\,\mu^2\,v_n^2,
\]
which is less than $\frac{n}{n+1}\,\mu\,v_n^2$ when $n \ge 9$.

The remaining cases $4\leq n\leq 8$ can be verified directly by computer.
\end{proof}

\subsection{The second inequality}

In this section, we prove Theorem~\ref{thm:HHC-ineq} (b).
Note that 
\begin{equation*}%\label{eq:An}
\mathfrak L(\mathsf Q_n) =  
\begin{bmatrix}
n & 2-n & -2 & 0 & \cdots & 0 & 0\\
2-n & 2n-2 & 2-n & -2 & \ddots & \vdots & \vdots\\
-2 & 2-n & 2n & 2-n & \ddots & 0 & 0\\
0 & -2 & 2-n & \ddots & \ddots & -2& 0\\
\vdots & \ddots & \ddots & \ddots & 2n & 2-n & -2\\
0 & \hdots & 0 & -2 & 2-n & 2n-2 & 2-n\\
0 & \hdots & 0 & 0 & -2 & 2-n & n
\end{bmatrix}.
\end{equation*}
% We spell out the details of this equivalence at the end of the section.
We begin, as in the previous section, with a rough bound on the second-smallest eigenvalue of $\mathfrak L(\mathsf Q_n)$.

\begin{lemma}\label{lem:upperbound-mu2-Ln}
Let $n\ge 4$. Then
\begin{equation}\label{eq:main}
  \mu_2(\mathfrak L(\mathsf Q_n))\ \le\
  \frac{12\,(n-2)(n+7)}{n(n^2-1)}.
\end{equation}
In particular, $\mu_2(\mathfrak L(\mathsf Q_n))\le 4$ for all $n\ge 5$.
\end{lemma}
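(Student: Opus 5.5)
The plan is to bound $\mu_2(\mathfrak L(\mathsf Q_n))$ from above by a Rayleigh quotient with a well-chosen test vector. By Remark~\ref{rem:pf}, $\mathfrak L(\mathsf Q_n)=\frac{n^2+n-6}{2}I-\mathsf Q_n$ is positive semidefinite with kernel spanned by $\mathbf 1_n$. Hence
\[
\mu_2(\mathfrak L(\mathsf Q_n))\le \frac{\mathbf x^\transpose\mathfrak L(\mathsf Q_n)\mathbf x}{\mathbf x^\transpose\mathbf x}
\]
for every nonzero $\mathbf x\perp\mathbf 1_n$. The form of the bound, with denominator $n(n^2-1)/12$, suggests taking the centred linear vector $x_i=i-\frac{n+1}{2}$. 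It is orthogonal to $\mathbf 1_n$ and satisfies
\[
\mathbf x^\transpose\mathbf x=\sum_{i=1}^n\Bigl(i-\tfrac{n+1}{2}\Bigr)^2=\frac{n(n^2-1)}{12}.
\]

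For the numerator, I would read $\mathfrak L(\mathsf Q_n)$ as the Laplacian of a weighted graph on $[n]$. Reading off the displayed matrix, the off-diagonal entries say each pair $\{i,i+1\}$ carries weight $n-2$ and each pair $\{i,i+2\}$ carries weight $2$. One checks that the diagonal entries are consistent with this: they are $n$, $2n-2$, $2n,\dots,2n$, $2n-2$, $n$, which are exactly the row sums of these weights (e.g.\ $2n-2=(n-2)+(n-2)+2$ and $2n=2+(n-2)+(n-2)+2$). Therefore
\[
\mathbf x^\transpose\mathfrak L(\mathsf Q_n)\mathbf x=\sum_{i=1}^{n-1}(n-2)(x_{i+1}-x_i)^2+\sum_{i=1}^{n-2}2(x_{i+2}-x_i)^2 .
\]
With the linear vector, every difference $x_{i+1}-x_i$ equals $1$ and every difference $x_{i+2}-x_i$ equals $2$. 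This gives
\[
(n-1)(n-2)+8(n-2)=(n-2)(n+7),
\]
so the ratio is exactly $\frac{12(n-2)(n+7)}{n(n^2-1)}$, which proves \eqref{eq:main}.

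For the ``in particular'' claim, the inequality $\frac{12(n-2)(n+7)}{n(n^2-1)}\le 4$ is equivalent to $3(n^2+5n-14)\le n^3-n$, that is, to $f(n):=n^3-3n^2-16n+42\ge 0$. At $n=5$ we get $f(5)=12>0$. For $n\ge 5$ we have $f'(n)=3n^2-6n-16>0$, so $f$ is increasing and the claim holds for all $n\ge5$.

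The main obstacle is only the bookkeeping: checking that the diagonal of $\mathfrak L(\mathsf Q_n)$ really matches the claimed weights, including at the boundary rows $1,2,n-1,n$, so that the edge-sum expression for the quadratic form is valid.
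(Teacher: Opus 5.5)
Your proposal is correct and follows the paper's proof essentially verbatim. You use the same centred linear test vector and the same edge-sum decomposition of the quadratic form (weights $n-2$ and $2$), obtaining $(n-2)(n+7)$ over $n(n^2-1)/12$. Your explicit check that $n^3-3n^2-16n+42\ge 0$ for $n\ge 5$ also supplies the ``in particular'' claim, which the paper leaves unverified.
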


\begin{proof}
Let $\mathbf x = (x_1,\dots,x_n) \in \mathbb R^n$ be defined by 
\[
  x_i := i-\frac{n+1}{2},\qquad 1\le i\le n.
\]
Then the vector $\mathbf x$ is orthogonal to $\mathbf 1_n$ and hence
\[
  \mu_2(\mathfrak L(\mathsf Q_n))\ \le\ \frac{\mathbf x^\transpose \mathfrak L(\mathsf Q_n) \mathbf x}{\mathbf x^\transpose \mathbf x}.
\]

Using the pentadiagonal structure of  $\mathfrak L(\mathsf Q_n)$ one checks that, for any $\mathbf y\in\mathbb R^n$,
\[
  \mathbf y^\transpose \mathfrak L(\mathsf Q_n) \mathbf y
  = (n-2)\sum_{i=1}^{n-1}(y_{i+1}-y_i)^2
    + 2\sum_{i=1}^{n-2}(y_{i+2}-y_i)^2.
\]
For the specific choice of $\mathbf x$ above, we have
  $x_{i+1}-x_i = 1$ and $x_{i+2}-x_i = 2$, whence,
\[
  \mathbf x^\transpose \mathfrak L(\mathsf Q_n) \mathbf x
  = (n-2)(n-1) + 2\cdot 4(n-2)
  = (n-2)(n+7).
\]

It follows that
\[
  \mu_2(\mathfrak L(\mathsf Q_n))
  \le \frac{\mathbf x^\transpose \mathfrak L(\mathsf Q_n)\mathbf x}{\mathbf x^\transpose  \mathbf x}
  = \frac{12\,(n-2)(n+7)}{n(n^2-1)},
\]
as required.
\end{proof}

Let $A$ be a real symmetric matrix.
Denote by $A[i,j]$ the $(i,j)$-entry of $A$.
We call $A$ a 
\textbf{Robinson matrix}
if
\[
\begin{cases}
A[i,j]\le A[i,k]\qquad\text{whenever } j<k<i,
\\
A[i,j]\ge A[i,k]\qquad\text{whenever } i<j<k.
\end{cases}
\]
Equivalently, for every row/column, the off-diagonal entries of $A$ are nonincreasing as the distance to the diagonal increases.

The matrix $A$ is called \textbf{irreducible} if it cannot be decomposed into a nontrivial direct sum of matrices.
We call a vector $\mathbf x = (x_1,\dots,x_n)$ \textbf{non-increasing} if $x_i \le x_{i+1}$ for each $i \in \{1,\dots,n-1\}$.
An eigenvector for the second-smallest eigenvalue of $\mathfrak L(A)$ is called a \textbf{Fiedler vector}.

\begin{lemma}[{\cite[Theorem~3.2 and Theorem~4.6]{ABH}}]\label{lem:ABH-monotone}
Let $A$ be a 
Robinson matrix. 
Then
\begin{enumerate}
\item $\mathfrak L(A)$ has a non-increasing Fiedler vector.
\item If $A$ is irreducible with $\min_{i,j} A[i,j]=0$, then $\mu_2(\mathfrak L(A))>0$ and $\mu_2(\mathfrak L(A))$ is simple.
\end{enumerate}
\end{lemma}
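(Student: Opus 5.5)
The plan is to prove both parts with a Perron--Frobenius argument applied to the action of $\mathfrak L(A)$ on \emph{consecutive differences}. We avoid the Rayleigh quotient because it does not see monotonicity. Write
\[
\mathfrak L(A)=\sum_{p<q}A[p,q](\mathbf e_p-\mathbf e_q)(\mathbf e_p-\mathbf e_q)^\transpose .
\]
Let $D$ be the $(n-1)\times n$ forward-difference matrix, $(D\mathbf x)_i=x_{i+1}-x_i$. Let $S$ be the $n\times(n-1)$ cumulative-sum matrix, $S\mathbf e_k=\sum_{j>k}\mathbf e_j$, so that $DS=I_{n-1}$ and $SD\mathbf x=\mathbf x-x_1\mathbf 1_n$. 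Since $\mathfrak L(A)\mathbf 1_n=\mathbf 0_n$, we get $D\,\mathfrak L(A)=K D$ with $K:=D\,\mathfrak L(A)\,S$. Hence every eigenvector $\mathbf x\perp\mathbf 1_n$ of $\mathfrak L(A)$ gives an eigenvector $D\mathbf x$ of $K$ with the same eigenvalue. Conversely, if $K\mathbf d=\mu\mathbf d$ with $\mu\neq 0$, then $\mathbf x:=S\mathbf d-\frac{\mathbf 1_n^\transpose S\mathbf d}{n}\mathbf 1_n$ satisfies $\mathfrak L(A)\mathbf x=\mu\mathbf x$. In particular the spectrum of $K$ is the spectrum of $\mathfrak L(A)$ with one copy of $0$ removed, so it is real and its smallest element is $\mu_2(\mathfrak L(A))$.

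The key computation is that $K$ is a $Z$-matrix, i.e.\ its off-diagonal entries are nonpositive. The vector $\mathfrak L(A)S\mathbf e_k$ has $j$-th entry
\[
\begin{cases}
\sum_{m\le k}A[j,m] & \text{if } j>k,\\
-\sum_{m>k}A[j,m] & \text{if } j\le k.
\end{cases}
\]
Differencing in $j$ gives the entries of $K$.
\begin{itemize}
\item For $i>k$: $K[i,k]=\sum_{m\le k}\bigl(A[i+1,m]-A[i,m]\bigr)$. Each term is $\le 0$ by the Robinson property, because $m<i<i+1$.
\item For $i<k$: $K[i,k]=-\sum_{m>k}\bigl(A[i+1,m]-A[i,m]\bigr)$. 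This is $\le0$ because $i<i+1<m$ forces $A[i+1,m]\ge A[i,m]$.
\end{itemize}
Thus $cI-K$ is entrywise nonnegative for large $c$. Its spectral radius $c-\mu_2$ is attained with a nonnegative eigenvector $\mathbf d\ge 0$ by Perron--Frobenius. The vector $\mathbf x$ built from $\mathbf d$ as above is then a Fiedler vector with $x_{i+1}-x_i=d_i\ge0$. This proves part 1, with the ordering convention of the paper.

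For part 2, irreducibility of $A$ means the weighted graph of $A$ is connected, so $\ker\mathfrak L(A)=\operatorname{span}\mathbf 1_n$ and $\mu_2>0$. For simplicity I would show that $K$ (equivalently $cI-K$) is irreducible. Perron--Frobenius then makes $c-\mu_2$ a simple eigenvalue of $cI-K$, hence $\mu_2$ simple for $K$, hence simple for $\mathfrak L(A)$ by the eigenvector correspondence.

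I expect the irreducibility of $K$ to be the main obstacle. The entry $K[i,k]$ with $i>k$ is nonzero exactly when some $m\le k$ has $A[i+1,m]<A[i,m]$, a strict drop along a column. The case $i<k$ is symmetric. I would first try to use the hypothesis $\min A=0$ together with irreducibility as follows. Irreducibility of a Robinson matrix forces every superdiagonal entry $A[i,i+1]$ to be positive, since otherwise the Robinson property kills the whole block above position $(i,i+1)$. Meanwhile $A[1,n]=0$, so every column (and row) must eventually drop from a positive value down to $0$. The goal is to extract, for each $i$, a strict drop connecting index $i$ to a neighbour, which would give a path in the digraph of $K$ between any two indices. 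If this direct argument stalls, the fallback is to show instead that every nonnegative eigenvector of $cI-K$ for $c-\mu_2$ is strictly positive. One would then argue that two independent such eigenvectors could be combined into one with a zero entry, contradicting the strict drop forced at that coordinate by $\min A=0$.
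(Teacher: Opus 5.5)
Your part 1 argument is correct. It is self-contained, whereas the paper gives no argument and simply cites Atkins--Boman--Hendrickson. The sign computation making $K=D\,\mathfrak L(A)\,S$ a $Z$-matrix checks out, and your converse correspondence needs no restriction $\mu\neq 0$, because $D(\mathfrak L(A)S\mathbf d-\mu S\mathbf d)=\mathbf 0$ and centring remove the multiple of $\mathbf 1_n$ for every $\mu$.

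\textbf{The gap is in simplicity, and neither of your routes can be completed.} Take $n\ge 4$ and $A=\mathbf 1_n\mathbf 1_n^\transpose-I-\mathbf e_1\mathbf e_n^\transpose-\mathbf e_n\mathbf e_1^\transpose$, the complete graph minus the edge $\{1,n\}$. This is an irreducible Robinson matrix with $A[1,n]=0$.
\begin{itemize}
\item For $2\le i\le n-2$, every difference $A[i+1,m]-A[i,m]$ entering row $i$ of $K$ equals $1-1=0$. So these rows of $K$ have no off-diagonal entries, and $K$ is reducible.
\item $\mathfrak L(A)$ acts on $\mathbf 1_n^\perp$ as $nI-(\mathbf e_1-\mathbf e_n)(\mathbf e_1-\mathbf e_n)^\transpose$. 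Hence $\mu_2=n-2$ is simple, with Fiedler vector $\mathbf e_n-\mathbf e_1$.
\item Its difference vector $D(\mathbf e_n-\mathbf e_1)=\mathbf e_1+\mathbf e_{n-1}\in\mathbb R^{n-1}$ is, up to scaling, the only nonnegative $\mu_2$-eigenvector of $K$. It has zero entries, so your fallback claim is false too.
\end{itemize}
Your heuristic that every column must drop to $0$ applies only to columns $1$ and $n$. Strict positivity of right eigenvectors is not what drives simplicity here.

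\textbf{The missing ingredient is a left eigenvector.} Let $\mathbf x$ be the nonconstant non-decreasing Fiedler vector from part 1, and set $\mathbf w:=S^\transpose\mathbf x$.
\begin{itemize}
\item Since $\mathbf x^\transpose SD=\mathbf x^\transpose$, we get $\mathbf w^\transpose K=\mu_2\mathbf w^\transpose$. Moreover $w_k=\sum_{j>k}x_j>0$ for every $k$, because the top $n-k$ entries of a nonconstant zero-sum non-decreasing vector have positive sum.
\item Let $\mathbf d'$ be any real $\mu_2$-eigenvector of $K$, and let $M=cI-K\ge 0$ with $c$ large. Then $M|\mathbf d'|\ge(c-\mu_2)|\mathbf d'|$ entrywise, and pairing with $\mathbf w>0$ forces equality. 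So $|\mathbf d'|$ is an eigenvector, and consequently $d'_k\neq0$ together with $K[i,k]<0$ implies $d'_i\neq 0$.
\item Let $p$ be the largest index with $A[1,p+1]<A[1,p]$; it exists because $A[1,2]>0=A[1,n]$. Then $A[1,m]=0$ for $m>p$, and $K[p,k]<0$ for all $k<p$.
\item Work in the digraph with $i\to k$ iff $K[i,k]<0$. Suppose $b+1$ were the least index not reachable from $p$. Then $b\ge p$ and $K[i,b+1]=0$ for all $i\le b$. This forces $A[j,m]=A[1,m]=0$ whenever $j\le b+1<m$, contradicting irreducibility.
\end{itemize}
Hence every index is reachable from $p$. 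It follows that $d'_p\neq 0$ for every nonzero $\mu_2$-eigenvector of $K$, so that eigenspace is one-dimensional.

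Finally, state explicitly that you read $\min_{i,j}A[i,j]=0$ as $A[1,n]=0$. If the minimum is attained only on the diagonal, the lemma is false. For example, $A=\mathbf 1_n\mathbf 1_n^\transpose-I$ with $n\ge 3$ gives $\mathfrak L(A)=nI-\mathbf 1_n\mathbf 1_n^\transpose$, where $\mu_2=n$ has multiplicity $n-1$.
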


For each $n\in\mathbb N$, the \textbf{exchange matrix} $J_n$ is the permutation
matrix of order $n$ with ones on the anti-diagonal and zeros elsewhere.
The exchange matrix satisfies $J_n^2=I$ and $J^{-1}_n=J_n$. 
A real matrix $A$ of order $n$ is called \textbf{centrosymmetric} if $AJ_n = J_nA$ and a vector $\mathbf x=(x_1,\dots,x_n)^\transpose\in\mathbb C^n$ is called
\textbf{skew-symmetric} if
$J_n\mathbf x=-\mathbf x$ (that is, $x_i=-x_{n+1-i}$ for all $i$). 

\begin{lemma}[{\cite[Theorem~2(i)]{Andrew}}]\label{lem:centro}
Let $A$ be a centrosymmetric matrix of order $n$.
Then, for each eigenspace of $A$ admits a basis consisting
entirely of vectors $\mathbf x$ satisfying $J_n\mathbf x=\mathbf x$ or $J_n\mathbf x=-\mathbf x$.
\end{lemma}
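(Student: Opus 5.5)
The statement to prove is Lemma~\ref{lem:centro}: every eigenspace of a centrosymmetric $A$ has a basis of vectors that are either symmetric ($J_n\mathbf x=\mathbf x$) or skew-symmetric ($J_n\mathbf x=-\mathbf x$). The plan is to exploit that $J_n$ commutes with $A$ and is an involution, so it acts on each eigenspace and splits it into its $\pm1$ eigenspaces.

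First, fix an eigenvalue $\lambda$ of $A$ and let $E_\lambda=\ker(A-\lambda I)$, working over $\mathbb C$. For $\mathbf x\in E_\lambda$,
\[
A(J_n\mathbf x)=J_nA\mathbf x=\lambda J_n\mathbf x,
\]
using $AJ_n=J_nA$. Hence $J_n$ maps $E_\lambda$ into itself.

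Next, define the two operators
\[
P_\pm:=\tfrac12\,(I\pm J_n).
\]
Since $J_n^2=I$, these satisfy $P_+ + P_- = I$, $P_\pm^2=P_\pm$ and $P_+P_-=0$. Also $J_nP_\pm=\pm P_\pm$, because $J_n(I\pm J_n)=J_n\pm I=\pm(I\pm J_n)$. Each $P_\pm$ is a polynomial in $J_n$, so it commutes with $A$ and preserves $E_\lambda$. Consequently
\[
E_\lambda=P_+E_\lambda\oplus P_-E_\lambda .
\]
The sum is direct because a vector fixed by both $J_n$ and $-J_n$ is zero. Every vector of $P_+E_\lambda$ is symmetric and every vector of $P_-E_\lambda$ is skew-symmetric.

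Finally, choose a basis of $P_+E_\lambda$ and a basis of $P_-E_\lambda$; their union is a basis of $E_\lambda$ of the required form. If $A$ is real and $\lambda$ is real, the same construction works over $\mathbb R$, because $P_\pm$ are real matrices. Thus the bases can be taken real, which is the setting used later for the Laplacians $\mathfrak L(\mathsf Q_n)$.

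There is no real obstacle here. The only point needing care is the observation that $J_n$, and hence $P_\pm$, preserves $E_\lambda$ because $J_n$ commutes with $A$. Equivalently, $J_n|_{E_\lambda}$ is diagonalisable with eigenvalues in $\{\pm1\}$, since its minimal polynomial divides $x^2-1$, which has distinct roots.
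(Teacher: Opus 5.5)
Your proof is correct and complete. Because $J_n$ commutes with $A$, it preserves each eigenspace $E_\lambda$, and the complementary idempotents $\tfrac12(I\pm J_n)$ split $E_\lambda$ into its symmetric and skew-symmetric parts; concatenating bases of the two parts gives the required basis, and this basis can be taken real when $A$ and $\lambda$ are real. The paper gives no proof of this lemma and only cites Andrew's Theorem~2(i); your splitting of each eigenspace by the commuting involution $J_n$ is the standard argument behind that result, so your write-up fills in what the paper takes from the reference.
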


Clearly, $\mathsf Q_n$ is an irreducible, symmetric Robinson matrix.
Furthermore, $\mathfrak L(\mathsf Q_n)$ is centrosymmetric for $n \ge 4$.
Thus, applying Lemma~\ref{lem:ABH-monotone} and Lemma~\ref{lem:centro} yields the following corollary.

\begin{corollary}\label{cor:fiedler-monotoneskewsym}
Let $n\ge 4$.
Then $\mathfrak L(\mathsf Q_{n})$ has a Fiedler vector that is non-increasing and skew-symmetric and $\mu_2(\mathfrak L(\mathsf Q_{n}))$ is a simple eigenvalue.
\end{corollary}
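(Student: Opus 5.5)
The corollary combines Lemma~\ref{lem:ABH-monotone} and Lemma~\ref{lem:centro}. First I check their hypotheses.

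\emph{Hypotheses.} From the displayed form of $\mathsf Q_n$, the off-diagonal entries in each row are $n-2$ at distance one, $2$ at distance two, and $0$ beyond. Since $n\ge 4$ gives $n-2\ge 2\ge 0$, $\mathsf Q_n$ is a symmetric Robinson matrix. It is irreducible because the superdiagonal entries $n-2$ are positive, so the associated graph contains a Hamiltonian path. Also $\min_{i,j}\mathsf Q_n[i,j]=0$ as soon as $n\ge 4$: the entry $(1,4)$ is zero. For centrosymmetry, the entry $\mathsf Q_n[i,j]$ depends only on $|i-j|$ and on the diagonal value, and the diagonal values are palindromic. Hence $J_n\mathsf Q_nJ_n=\mathsf Q_n$, and therefore $J_n\mathfrak L(\mathsf Q_n)J_n=\mathfrak L(\mathsf Q_n)$, because $J_n\mathbf 1_n=\mathbf 1_n$.

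\emph{Simplicity.} Lemma~\ref{lem:ABH-monotone}(2) shows that $\mu_2(\mathfrak L(\mathsf Q_n))>0$ and that this eigenvalue is simple. Let $\mathbf v$ span its eigenspace. By Lemma~\ref{lem:centro}, a one-dimensional eigenspace has a basis of a symmetric or skew-symmetric vector. So $J_n\mathbf v=\pm\mathbf v$. By Lemma~\ref{lem:ABH-monotone}(1), we may choose $\mathbf v$ monotone in the paper's sense, $v_1\le\cdots\le v_n$, since by simplicity every Fiedler vector is a scalar multiple of the monotone one.

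\emph{Excluding symmetry.} Suppose $J_n\mathbf v=\mathbf v$. Then monotonicity gives $v_1\le v_n=v_1$, so $\mathbf v$ is constant. But $\mathbf v\perp\mathbf 1_n$, because it belongs to a nonzero eigenvalue of a symmetric matrix whose kernel contains $\mathbf 1_n$. So a constant $\mathbf v$ must be zero, a contradiction. Hence $J_n\mathbf v=-\mathbf v$, and $\mathbf v$ is both monotone and skew-symmetric.

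\emph{Main obstacle.} There is no real difficulty here. The only care needed is to check that Lemma~\ref{lem:ABH-monotone}(2) applies, which requires the zero entry and so $n\ge4$, and to note that simplicity lets us transfer monotonicity to the vector supplied by centrosymmetry.
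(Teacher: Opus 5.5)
Your proposal is correct and follows the same route as the paper, which simply notes that $\mathsf Q_n$ is an irreducible symmetric Robinson matrix and that $\mathfrak L(\mathsf Q_n)$ is centrosymmetric, and then invokes Lemma~\ref{lem:ABH-monotone} and Lemma~\ref{lem:centro}. You also make explicit two steps the paper leaves implicit: simplicity is what lets a single Fiedler vector be both monotone and symmetric or skew-symmetric, and the symmetric case is excluded because a monotone vector with $v_1=v_n$ is constant, hence zero by orthogonality to $\mathbf 1_n$.
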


We are now ready to prove the main result of this section, from which the proof of Theorem~\ref{thm:HHC-ineq} (b) follows.

\begin{proposition}\label{prop:eigen-Ln-drop}
Let $n\geqslant 4$. Then
\[
\mu_2(\mathfrak L(\mathsf Q_{n+1}))<\mu_2(\mathfrak L( \mathsf Q_{n})).
\]  
\end{proposition}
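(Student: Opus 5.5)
The plan is to argue via the Rayleigh quotient, as in Proposition~\ref{prop:eigen-Ln1-drop}. Let $\mu=\mu_2(\mathfrak L(\mathsf Q_n))$, and by Corollary~\ref{cor:fiedler-monotoneskewsym} take a Fiedler vector $\mathbf v$ that is non-increasing and skew-symmetric. From the identity in the proof of Lemma~\ref{lem:upperbound-mu2-Ln}, for $\mathbf y\in\mathbb R^{n+1}$ we have
\[
\mathbf y^\transpose\mathfrak L(\mathsf Q_{n+1})\mathbf y=(n-1)\sum_{i=1}^{n}(y_{i+1}-y_i)^2+2\sum_{i=1}^{n-1}(y_{i+2}-y_i)^2 .
\]
Writing $S_1(\mathbf v)=\sum(v_{i+1}-v_i)^2$ and $S_2(\mathbf v)=\sum(v_{i+2}-v_i)^2$, the eigen-relation gives
\[
\mu\,\mathbf v^\transpose\mathbf v=(n-2)S_1(\mathbf v)+2S_2(\mathbf v).
\]
Passing from $n$ to $n+1$ raises the nearest-neighbour weight from $n-2$ to $n-1$. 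So any test vector $\mathbf x\in\mathbb R^{n+1}$ must lower the \emph{differences} enough to beat the factor $(n-1)/(n-2)$.

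A first natural attempt is to insert a single $0$ at the centre of $\mathbf v$. This keeps skew-symmetry, so $\mathbf x\perp\mathbf 1_{n+1}$ and $\mathbf x^\transpose\mathbf x=\mathbf v^\transpose\mathbf v$. However, it only saves $O(\text{slope}^2)$ near the middle, whereas the weight increase costs $S_1(\mathbf v)$, which is about $n$ times larger. So I would not use it.

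Instead I would \emph{stretch} $\mathbf v$. Let $\tilde v$ be the piecewise-linear interpolation of $\mathbf v$ on $[1,n]$, and set
\[
x_j:=\tilde v\Bigl(1+\tfrac{(j-1)(n-1)}{n}\Bigr),\qquad j=1,\dots,n+1 .
\]
This $\mathbf x$ is still non-increasing and skew-symmetric, hence orthogonal to $\mathbf 1_{n+1}$. Heuristically, each consecutive difference is a convex combination of differences of $\mathbf v$ scaled by $(n-1)/n$. By Cauchy--Schwarz/Jensen this should give
\[
S_1(\mathbf x)\le \tfrac{n-1}{n}\,S_1(\mathbf v),
\]
and similarly for $S_2$. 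So the energy satisfies roughly
\[
\mathbf x^\transpose\mathfrak L(\mathsf Q_{n+1})\mathbf x\lesssim \tfrac{n-1}{n}\Bigl(\tfrac{n-1}{n-2}(n-2)S_1+2S_2\Bigr)\approx \bigl(1-\tfrac1n+O(n^{-2})\bigr)\mu\,\mathbf v^\transpose\mathbf v .
\]
Meanwhile $\mathbf x^\transpose\mathbf x\approx\tfrac{n+1}{n}\,\mathbf v^\transpose\mathbf v$, since there is one more sample of the same profile. The net ratio is then about $1-2/n$ times $\mu$, which gives strict decrease for large $n$.

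The main obstacle is making the norm comparison rigorous: we need a lower bound $\mathbf x^\transpose\mathbf x\ge(1-\varepsilon_n)\tfrac{n+1}{n}\mathbf v^\transpose\mathbf v$ with an explicit small $\varepsilon_n$. For this I would use that $\mathbf v$ is monotone, together with the smoothness forced by the eigen-relation. Since $\mu\le 4$ by Lemma~\ref{lem:upperbound-mu2-Ln}, we get $S_1(\mathbf v)\le\tfrac{4}{n-2}\mathbf v^\transpose\mathbf v$. This controls the interpolation error $\sum_j\bigl(x_j^2-\text{(averaged }v_i^2)\bigr)$ by a multiple of $S_1(\mathbf v)$, i.e.\ $O(1/n)$ relative to $\mathbf v^\transpose\mathbf v$.

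It may instead be cleaner to avoid interpolation altogether. One could use the test vector $\mathbf x=(v_1,\dots,v_{k},w,v_{k+1}',\dots)$ with the central gap spread evenly across all differences, and compute $S_1,S_2$ exactly in terms of $\mathbf v$. If the resulting explicit constants only give the inequality for $n\ge n_0$, the finitely many cases $4\le n<n_0$ would be checked by computer, as done for Proposition~\ref{prop:eigen-Ln1-drop}.
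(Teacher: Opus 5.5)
There is a genuine gap. The step you call "the main obstacle" is in fact the whole proof, and your heuristic overstates how much room there is.

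\textbf{The energy does not drop.} Cauchy--Schwarz does give $S_1(\mathbf x)\le\frac{n-1}{n}S_1(\mathbf v)$. But then $(n-1)S_1(\mathbf x)\le\bigl(n-2+\frac1n\bigr)S_1(\mathbf v)$, so the stretch does \emph{not} beat the factor $\frac{n-1}{n-2}$ you flagged: $\frac{n-1}{n}\cdot\frac{n-1}{n-2}=1+\frac{1}{n(n-2)}>1$. Your display drops this factor. In fact $\frac{n-1}{n}\bigl((n-1)S_1+2S_2\bigr)=\frac{n-1}{n}\bigl(\mu\,\mathbf v^\transpose\mathbf v+S_1(\mathbf v)\bigr)$. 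Since $\frac{\mu\,\mathbf v^\transpose\mathbf v}{n+6}\le S_1(\mathbf v)\le\frac{\mu\,\mathbf v^\transpose\mathbf v}{n-2}$, this is $\bigl(1+O(n^{-2})\bigr)\mu\,\mathbf v^\transpose\mathbf v$, not $\bigl(1-\frac1n\bigr)\mu\,\mathbf v^\transpose\mathbf v$. For linear $\mathbf v$ the energy ratio is exactly $\frac{(n-1)^3(n+8)}{n^2(n-2)(n+7)}=1-O(n^{-2})$, and the norm ratio is $\frac{(n-1)(n+2)}{n^2}=1+\frac1n-\frac{2}{n^2}$. So the net gain is about $\frac1n$, not $\frac2n$, and it comes entirely from the norm. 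Also, "similarly for $S_2$" is not right: $S_2(\mathbf x)\le\frac{n-1}{n}S_2(\mathbf v)$ already fails for linear $\mathbf v$. Cauchy--Schwarz only gives $S_2(\mathbf x)\le\frac{4(n-1)}{n}S_1(\mathbf v)$, which is too lossy unless you also control $\sum_i(d_{i+1}-d_i)^2$.

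\textbf{The norm estimate is missing.} You therefore need $\mathbf x^\transpose\mathbf x\ge\bigl(1+\frac1n-\varepsilon_n\bigr)\mathbf v^\transpose\mathbf v$ with $\varepsilon_n$ explicitly $o(1/n)$, and this is not supplied. The control you sketch cannot deliver it. The interpolation loss, for example the term $\alpha(1-\alpha)(v_{i+1}-v_i)^2$ in $\tilde v(i+\alpha)^2=(1-\alpha)v_i^2+\alpha v_{i+1}^2-\alpha(1-\alpha)(v_{i+1}-v_i)^2$, is of the order of $S_1(\mathbf v)$. From $\mu\le 4$ you only get $S_1(\mathbf v)\le\frac{4}{n-2}\mathbf v^\transpose\mathbf v$, an error of the same order $\frac1n$ as the entire margin. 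The route might be rescued with the sharper bound \eqref{eq:main} (so $\mu=O(1/n)$ and $S_1(\mathbf v)=O(n^{-2})\,\mathbf v^\transpose\mathbf v$) plus a careful $S_2$ comparison, but as written the key estimate is absent.

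\textbf{How the paper avoids this.} The paper needs no interpolation. It takes $\mathbf x=(v_1,\dots,v_n,v_n)^\transpose-\frac{v_n}{n+1}\mathbf 1_{n+1}$, i.e.\ it duplicates the \emph{last} entry. The new weight-$(n-1)$ edge then carries zero difference, so for unit $\mathbf v$ the numerator is $\mu+S_1(\mathbf v)+2(v_{n-1}-v_n)^2$. The denominator is $1+\frac{n}{n+1}v_n^2$, and $v_n^2\ge\frac1n$ for a monotone skew-symmetric unit vector. The pentadiagonal identity with $d_i\ge0$ and $d_1=d_{n-1}$ gives $S_1(\mathbf v)\le\frac{\mu+4(v_n-v_{n-1})^2}{n+2}$. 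The last row of the eigen-equation gives $(n-2)(v_n-v_{n-1})\le\mu v_n$. The gain $\frac{n}{n+1}\mu v_n^2\ge\frac{\mu}{n+1}$ then beats the cost of roughly $\frac{\mu}{n+2}$ for $n\ge15$, and $4\le n\le14$ is checked by computer.

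Your rejected "insert a zero in the middle" fails precisely because the middle is where $|v_i|$ is smallest. Inserting at the end, where it is largest, is the idea you are missing.
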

\begin{proof}

Let $\mathbf v = (v_1,\dots,v_n)^\transpose$ be a unit eigenvector of $\mathfrak L(\mathsf Q_n)$ corresponding to the second-smallest eigenvalue $\mu=\mu_2(\mathfrak L(\mathsf Q_n))$.
% $\mu_2(\mathfrak L( \mathsf Q_{n}))$-eigenvector for $\mathfrak L( \mathsf Q_{n})$.  
Let $\mathbf x = (v_1,\dots,v_n,v_n)^\transpose -\frac{v_n}{n+1}\,\mathbf 1_{n+1}$.
    Since $\mathbf v$ is orthogonal to $\mathbf 1_n$, we also have $\mathbf x^\transpose \mathbf 1_{n+1} = 0$.
    Hence, using Remark~\ref{rem:pf}, we deduce $\mu_2(\mathfrak L( \mathsf Q_{n+1})) \leqslant \frac{\mathbf x^\transpose \mathfrak L( \mathsf Q_{n+1})\mathbf x}{\mathbf x^\transpose \mathbf x}$.
    We can express $\mathfrak L( \mathsf Q_{n+1})$ as
\begin{align*}
  \mathfrak L( \mathsf Q_{n+1})
  &= \begin{pmatrix}\mathfrak L( \mathsf Q_{n})&0\\0&0\end{pmatrix}
   + \sum_{i=1}^{n-1}(\mathbf e_i-\mathbf e_{i+1})(\mathbf e_i-\mathbf e_{i+1})^{\transpose} \\
  &\quad + (n-1)(\mathbf e_n-\mathbf e_{n+1})(\mathbf e_n-\mathbf e_{n+1})^{\transpose}
        + 2(\mathbf e_{n-1}-\mathbf e_{n+1})(\mathbf e_{n-1}-\mathbf e_{n+1})^{\transpose},
\end{align*}
where $\mathbf e_i$ denotes the standard basis vector in $\mathbb R^{n+1}$ whose $i$th entry is $1$ and all other entries equal $0$.
    It is straightforward to check that 
    \[
    \frac{\mathbf x^\transpose \mathfrak L( \mathsf Q_{n+1}) \mathbf x}{\mathbf x^\transpose \mathbf x} = \frac{\mu + \sum_{i=1}^{n-1}(v_i-v_{i+1})^2+2(v_{n-1}-v_n)^2}
         {1 + \dfrac{n}{n+1}\,v_n^2},
    \]
    whence,
    \[
    \mu_2(\mathfrak L( \mathsf Q_{n+1}))\le \frac{\mu + \sum_{i=1}^{n-1}(v_i-v_{i+1})^2+2(v_{n-1}-v_n)^2}
         {1 + \dfrac{n}{n+1}\,v_n^2}.
    \]
For $n\ge 15$, it therefore remains to prove
\begin{equation}\label{eq:target-ineq2}
  \sum_{i=1}^{n-1}(v_i-v_{i+1})^2+2(v_{n-1}-v_n)^2 \;<\; \frac{n}{n+1}\,\mu\,v_n^2.
\end{equation}

Set $d_i:=v_{i+1}-v_i$ for $1\le i\le n-1$.
From the pentadiagonal form of $\mathfrak L(\mathsf Q_n)$ it follows (as in the proof of Lemma~\ref{lem:upperbound-mu2-Ln}) that
\[
  \mu = \mathbf v^\transpose \mathfrak L(\mathsf Q_n) \mathbf v
  = (n-2)\sum_{i=1}^{n-1} d_i^2
    + 2\sum_{i=1}^{n-2}(d_i+d_{i+1})^2.
\]
Expanding the second sum yields

\begin{align*}
  \mu
  &= (n-2)\sum_{i=1}^{n-1} d_i^2
     + 2\left(2\sum_{i=1}^{n-1} d_i^2 - (d_1^2+d_{n-1}^2)
       + 2\sum_{i=1}^{n-2} d_i d_{i+1}\right)\\
  &= (n+2)\sum_{i=1}^{n-1} d_i^2
     - 2(d_1^2+d_{n-1}^2)
     + 4\sum_{i=1}^{n-2} d_i d_{i+1}.
\end{align*}

By Corollary~\ref{cor:fiedler-monotoneskewsym}, the Fiedler vector $\mathbf v$ can be chosen to be
non-increasing and skew-symmetric. 
Therefore $d_i\ge 0$ for all $i$, and hence
\begin{equation}
\label{ineq:1}
      \mu
  \ \ge\
  (n+2)\sum_{i=1}^{n-1} d_i^2 - 2(d_1^2+d_{n-1}^2).
\end{equation}

Skew-symmetry gives $v_1=-v_n$ and $v_2=-v_{n-1}$, so
\[
  d_1 = v_2-v_1 = -v_{n-1}+v_n = v_n-v_{n-1} = d_{n-1},
\]
and hence $d_1^2=d_{n-1}^2=(v_n-v_{n-1})^2$.
Substituting into \eqref{ineq:1} yields
\[
  \sum_{i=1}^{n-1}(v_{i+1}-v_i)^2
  = \sum_{i=1}^{n-1} d_i^2
  \ \le\
  \frac{\mu+4(v_n-v_{n-1})^2}{n+2}.
\]
We rewrite this inequality as 
\begin{equation}
\label{ineq:2}
  \sum_{i=1}^{n-1}(v_i-v_{i+1})^2+2(v_{n-1}-v_n)^2
\le \frac{\mu}{n+2}
     + 2\!\left(1+\frac{2}{n+2}\right)\!(v_{n-1}-v_n)^2.
\end{equation}

From the last row of the eigenvalue equation $\mathfrak L(\mathsf Q_n)\mathbf v=\mu \mathbf v$ we obtain
\[
  (n-2)(v_n-v_{n-1}) + 2(v_n-v_{n-2}) = \mu v_n.
\]
Since $\mathbf v$ is non-increasing (i.e., $v_{n-2}\le v_{n}$) we obtain $(n-2)(v_n - v_{n-1}) \;\le\; \mu v_n$.

Substituting this into \eqref{ineq:2} yields
\[
  \sum_{i=1}^{n-1}(v_i-v_{i+1})^2+2(v_{n-1}-v_n)^2
  \;\le\;
  \frac{\mu}{n+2}
  +\frac{2(n+4)\mu^2}{(n+2)(n-2)^2}\,v_n^2,
\]
Comparing the right-hand side of the above to that of \eqref{eq:target-ineq2}, it thus suffices to show
\[
 v_n^2\left (\frac{n(n+2)}{n+1}-\frac{2(n+4)\mu}{(n-2)^2} \right ) > 1.
\]
Since $\mathbf v$ is a non-increasing unit vector, $v_n^2 \ge 1/n$, moreover,
\[
v_n^2\left (\frac{n(n+2)}{n+1}-\frac{2(n+4)\mu}{(n-2)^2} \right ) \ge \frac{(n+2)}{n+1}-\frac{2(n+4)\mu}{n(n-2)^2}.
\]
By Lemma~\ref{lem:upperbound-mu2-Ln} we have $\mu\le 4$ for all $n\ge 5$.
Hence,
\[
 \frac{(n+2)}{n+1}-\frac{2(n+4)\mu}{n(n-2)^2} \ge \frac{(n+2)}{n+1}-\frac{8(n+4)}{n(n-2)^2},
\]
which is more than $1$ for $n \ge 15$.

For the remaining cases $4\le n\le 14$, one can directly verify the strict inequality
$\mu_2(\mathfrak L(\mathsf Q_{n+1}))<\mu_2(\mathfrak L(\mathsf Q_n))$ using a computer.
\end{proof}

Finally, we can complete the proof of Theorem~\ref{thm:HHC-ineq}.

\begin{proof}[Proof of Theorem~\ref{thm:HHC-ineq}(b)]
First, observe that
\[
  \lambda_2(\mathsf Q_n)
  \;=\; \frac{n^2+n-6}{2} - \mu_2(\mathfrak L(\mathsf Q_n)),
\]
and 
\[
  \lambda_2(\mathsf Q_{n-1})
  \;=\;  \frac{n^2-n-6}{2} - \mu_2(\mathfrak L(\mathsf Q_{n-1})).
\]

Clearly, the Schreier graph with adjacency matrix $\mathsf Q_n^\prime$ is $n$-regular and connected.
Hence,
\[
  \lambda_1(\mathsf Q_n^\prime) = n
  \quad\text{and}\quad
  \lambda_2(\mathsf Q_n^\prime) < n.
\]
By Proposition~\ref{prop:eigen-Ln-drop}, we have
$\mu_2(\mathfrak L(\mathsf Q_{n-1}))-\mu_2(\mathfrak  L(\mathsf Q_n))>0$ and thus
\[
  \lambda_2(\mathsf Q_n)
  - \bigl(\lambda_2(\mathsf Q_{n-1}) + \lambda_2(\mathsf Q_n^\prime)\bigr)
  \;=\; \bigl(\mu_2(\mathfrak L(\mathsf Q_{n-1}))-\mu_2(\mathfrak L(\mathsf Q_n))\bigr)
       + \bigl(n-\lambda_2(\mathsf Q_n^\prime)\bigr)
  \;>\; 0,
\]
as required.
\end{proof}

\bigskip
\noindent{\bf Acknowledgements.} GG was supported by the Singapore Ministry of Education Academic Research Fund; grant numbers: RG14/24 (Tier 1) and MOE-T2EP20222-0005 (Tier 2). The authors are grateful to Yuxuan Li for bringing this problem to their attention.

\bibliographystyle{plain}

\begin{thebibliography}{99}



\bibitem{A}
D.~Aldous and J.A.~Fill,
\emph{Reversible Markov Chains and Random Walks on Graphs},
unfinished monograph, 2002 (recompiled version, 2014), available online at
\url{http://www.stat.berkeley.edu/~aldous/RWG/book.html} (last checked: 25/02/2026).


%\href{https://www.stat.berkeley.edu/users/aldous/Research/OP/sgap.html}{Spectral Gap for the Interchange (Exclusion) Process on a Finite Graph (last checked:25/02/2026)}.

\bibitem{AKP} 
G. Alon, G. Kozma, and D. Puder,
\textit{On the Aldous-Caputo spectral gap conjecture for hypergraphs},
Math.\ Proc.\ Cambridge Philos.\ Soc. \textbf{179} (2025) 259-298.


\bibitem{Alon}
N.~Alon, 
\textit{Eigenvalues and expanders}, 
Combinatorica \textbf{6} (1986), no.~2, 83--96.

\bibitem{Alon2}
N.~Alon and V.D.~Milman, 
\textit{$\lambda_1$, isoperimetric inequalities for graphs, and superconcentrators}, 
J. Combin. Theory Ser. B, \textbf{38}(1) (1985), 73--88.

\bibitem{AP}
G.~Alon and D.~Puder,
\textit{Aldous-type spectral gaps in unitary groups},
preprint,  \href{https://arXiv.org/abs/2603.00353}{arXiv:2603.00353}, (2026).
%, doi:10.48550/arXiv.2603.00353.

\bibitem{Andrew}
A.L.~Andrew, 
\textit{Eigenvectors of certain matrices},
Linear Algebra Appl. \textbf{7} (1973), no.~2, 151--162.

\bibitem{ABH}
J.E. Atkins, E.G. Boman, and B. Hendrickson,
\textit{A spectral algorithm for seriation and the consecutive ones problem},
SIAM J. Comput. \textbf{28} (1998), no. 1, 297–310.

\bibitem{Bacher}
R.~Bacher,
\textit{Valeur propre minimale du laplacien de Coxeter pour le groupe symétrique}, 
J. Algebra \textbf{167}(2) (1994), 460--472.

% \bibitem{BCIM}
% A. E. Brouwer, S. M. Cioab\u{a}, F. Ihringer, and M. McGinnis,
% \textit{The smallest eigenvalues of Hamming graphs, Johnson graphs and other distance-regular graphs with classical parameters},
% J. Combin. Theory, Ser. B, \textbf{133} (2018), 88–121.

\bibitem{spectra}
A.E. Brouwer and W.H. Haemers, Spectra of graphs, Springer, New York, 2012.

\bibitem{CLR}
P. Caputo, T. Liggett, and T. Richthammer,
\textit{Proof of Aldous’ spectral gap conjecture},
J. Amer. Math. Soc., \textbf{23}(3) (2010), 831–851.

\bibitem {Cesi1} 
F. Cesi, 
\textit{Cayley graphs on the symmetric group generated by initial reversals have unit spectral gap},
Electron.\ J.\ Combin.\ \textbf{16} (2009), no.~1, \#N29.

\bibitem{Cesi2}
F. Cesi,
\textit{On the eigenvalues of Cayley graphs on the symmetric group generated by a complete multipartite set of transpositions},
J.\ Algebraic Combin.\ \textbf{32} (2010), no.~2, 155--185.

% \bibitem{CGK}
% X.-M. Cheng, G.R.W. Greaves, and J.H. Koolen,
% \textit{Graphs with three eigenvalues and second largest eigenvalue at most~1},
% J. Combin. Theory Ser. B \textbf{129} (2018), 55--78.

% \bibitem{CGGK}
% X.-M. Cheng, A.L.~Gavrilyuk, G.R.W.~Greaves, and J.H. Koolen,
% \textit{Biregular graphs with three eigenvalues},
% European J.\ Combin.\ \textbf{56} (2016), 57--80.


\bibitem{CT}
F. Chung and J. Tobin, 
\textit{The spectral gap of graphs arising from substring reversals},
Electron.\ J.\ Combin.\ \textbf{24} (2017), no.~3, \#P3.4.

% \bibitem{CY}
% F. Chung, S.-T. Yau,
% \textit{Coverings, heat kernels and spanning trees},
% Electron. J. Combin., \textbf{6} (1999), \#R12.

\bibitem{D1}
I.  Dai,
\textit{Combinatorial properties of Full-Flag Johnson graphs}, 
In \textit{Combinatorial Algorithms},  vol. 9538 of \textit{Lecture Notes in Comput. Sci.}, pages 112–123, Springer,2016.

\bibitem{D2}
I. Dai,
\textit{Diameter bounds and recursive properties of Full-Flag Johnson graphs},
Discrete Math., \textbf{341}(7) (2018), 1932–1944.

\bibitem{DS}
P.~Diaconis and M.~Shahshahani,
\textit{Generating a random permutation with random transpositions},
Z.\ Wahrscheinlichkeitstheorie verw.\ Gebiete \textbf{57} (1981), no.~2, 159--179.

\bibitem{Dieker}
A.B. Dieker,
\textit{Interlacings for random walks on weighted graphs and the interchange process}, 
SIAM J. Discrete Math. \textbf{24}(1) (2010), 191--206.

\bibitem{Do}
J.~Dodziuk, 
\textit{Difference equations, isoperimetric inequality and transience of certain random walks}, 
Trans. Amer. Math. Soc., \textbf{284}(2) (1984), 787--794.

\bibitem{F}
S. Fisk, 
\textit{A very short proof of Cauchy’s interlace theorem for eigenvalues of Hermitian matrices},
Amer. Math. Monthly, \textbf{112} (2005), no.~2, 118--118.

\bibitem{FOW}
L.~Flatto, A.M. Odlyzko, and D.B. Wales,
\textit{Random shuffles and group representations}, 
Ann. Probab. \textbf{13}(1) (1985), 154--178.

\bibitem{GZ} 
G. Greaves and H. Zhu, 
\textit{A note on some spectral properties of generalised pancake graphs}, \href{https://arXiv.org/abs/2509.09425}{arXiv:2509.09425}, (2025), to appear in Discrete Mathematics.

\bibitem{GR}
C. Godsil, G. Royle,
\textit{Algebraic Graph Theory}, volume 207 of \textit{Graduate Texts in Mathematics}. Springer-Verlag, New York, 2001.

\bibitem{HH} 
X. Huang and Q. Huang, 
\textit{The second largest eigenvalues of some Cayley graphs on alternating groups}, 
J. Algebraic Combin. \textbf{50} (2019), no.~1, 99--111.

\bibitem{HHC}
X. Huang, Q. Huang, and S. Cioab\u{a},
\textit{The second eigenvalue of some normal Cayley graphs of highly transitive groups},
Electron. J. Combin. \textbf{26}, (2019), no. 2, \#P2.44.

\bibitem{LiThesis}
Y.~Li,
\textit{\href{https://findanexpert.unimelb.edu.au/scholarlywork/1962159-the-second-largest-eigenvalue-of-cayley-graphs-on-symmetric-groups}{The second largest eigenvalue of Cayley graphs on symmetric groups}}, 
Ph.D.\ thesis, The University of Melbourne, 2024.

\bibitem{LXZ1}
Y. Li, B. Xia and S. Zhou, 
\textit{The second largest eigenvalue of normal Cayley graphs on symmetric groups generated by cycles}, 
J. Combin. Theory Ser. A, \textbf{206} (2024) 105885.

\bibitem{LXZ2}
Y. Li, B. Xia and S. Zhou, 
\textit{The second largest eigenvalue of some nonnormal Cayley graphs on symmetric groups}, 
J. Combin. Theory Ser. A, \textbf{218} (2026) 106097.

\bibitem{Mohar}
B.~Mohar, 
\textit{Isoperimetric numbers of graphs}, 
J. Combin. Theory Ser. B, \textbf{47}(3) (1989), 274--291.

% \bibitem{NPR}
% S. Noschese, L. Pasquini, and L. Reichel,
% \textit{Tridiagonal Toeplitz matrices: properties and novel applications},
% Numer. Linear Algebra Appl. \textbf{20} (2013), 302–326.

\bibitem{PP}
O. Parzanchevski and D. Puder,
\textit{Aldous’s spectral gap conjecture for normal sets},
Trans. Amer. Math. Soc., \textbf{373}(10) (2020), 7067–7086.

% \bibitem{Sagan}
% B. E. Sagan,
% \textit{The symmetric group: Representations, combinatorial algorithms, and symmetric functions},
% 2nd ed., Graduate Texts in Mathematics, vol.~203, Springer-Verlag, New York, 2001.

% \bibitem{Serre}
% J.-P. Serre,
% \textit{Linear Representations of Finite Groups},
% Graduate Texts in Mathematics, vol.~42, Springer-Verlag, 1977.

\bibitem{Z}
P.H. Zieschang, \textit{Cayley graphs of finite groups}, J. Algebra \textbf{118} (1988), no.~2, 447--454.

\end{thebibliography}

\end{document}